\newtheorem{theorem}[equation]{Theorem}
\newtheorem{lemma}[equation]{Lemma}
\newtheorem{proposition}[equation]{Proposition}
\newtheorem{corollary}[equation]{Corollary}
\newtheorem{conjecture}[equation]{Conjecture}
\newtheorem{definition}[equation]{Definition}
\newtheorem{question}[equation]{Question}
\newtheorem{example}[equation]{Example}
\theoremstyle{remark}
\newtheorem{remark}[equation]{Remark}
\numberwithin{equation}{subsection}
\newcommand{\FF}{\mathbb{F}}
\newcommand{\GG}{\mathbb{G}}
\newcommand{\CC}{\mathbb{C}}
\newcommand{\NN}{\mathbb{N}}
\newcommand{\bu}{\mathbf{u}}
\newcommand{\bv}{\mathbf{v}}
\newcommand{\bs}{\mathbf{s}}
\DeclareMathAlphabet{\matheur}{U}{eur}{m}{n}
\newcommand{\fs}{\mathfrak{s}}
\newcommand{\fm}{\mathfrak{m}}
 \DeclareMathOperator{\Lie}{Lie}
\DeclareMathOperator{\Mat}{Mat}
 \DeclareMathOperator{\wt}{wt}
\DeclareMathOperator{\Li}{Li}    
\DeclareMathOperator{\ad}{ad}
\DeclareMathOperator{\dep}{dep}
\DeclareMathOperator{\height}{ht}
\DeclareMathOperator{\ord}{ord}
\newcommand{\ok}{\overline{k}}
\newcommand{\tr}{\mathrm{tr}}
\begin{document}

\title[Integrality of $v$-adic multiple zeta values]{{\large{I\MakeLowercase{ntegrality of $v$-adic multiple zeta values} }}}

\author{Yen-Tsung Chen}
\address{Department of Mathematics, National Tsing Hua University, Hsinchu City 30042, Taiwan
  R.O.C.}

\email{s107021901@m107.nthu.edu.tw}

\thanks{The author was partially supported by Prof. C.-Y. Chang's MOST Grant 107-2628-M-007-002-MY4}

\subjclass[2010]{Primary 11R58, 11M32}

\date{Jan. 6, 2020}

\begin{abstract} 
    In this article, we prove the integrality of $v$-adic multiple zeta values (MZVs). For any index $\mathfrak{s}\in\mathbb{N}^r$ and finite place $v\in A:=\mathbb{F}_q[\theta]$, Chang and Mishiba introduced the notion of the $v$-adic MZVs $\zeta_A(\mathfrak{s})_v$, which is a function field analogue of Furusho's $p$-adic MZVs. By estimating the $v$-adic valuation of $\zeta_A(\mathfrak{s})_v$, we show that $\zeta_A(\mathfrak{s})_v$ is a $v$-adic integer for almost all $v$. This result can be viewed as a function field analogue of the integrality of $p$-adic MZVs, which was proved by Akagi-Hirose-Yasuda and Chatzistamatiou.
\end{abstract}

\keywords{}

\maketitle
\tableofcontents
\section{Introduction}
\subsection{Classical multiple zeta values}
    Real-valued multiple zeta values, abbreviated as MZVs, are real numbers defined by $$\zeta(s_1,\dots,s_r):=\sum_{n_1>\cdots>n_r\geq 1}\frac{1}{n_1^{s_1}\cdots n_r^{s_r}}\in\mathbb{R}^\times,$$ where $\fs:=(s_1,\dots,s_r)\in \mathbb{N}^r$ and $s_1\geq 2$. Here $\dep(\fs):=r$ is called the \emph{depth}, $\wt(\fs):=\sum_{i=1}^{r}s_i$ is called the \emph{weight} and $\height(\fs):=$ the cardinality of $\{i\mid s_i\neq 1\}$ is called the \emph{height}. Interesting properties of MZVs have been established in recent years, but there remain mysteries. For example, despite the many relations between MZVs which have been discovered, the exact $\mathbb{Q}$-linear structure of MZVs is still unclear. We refer the reader to \cite{BGF18,IKZ06,K19,Zh16} for more details.
    
    Real-valued MZVs come in many variants, we first briefly review the $p$-adic MZVs introduced by Furusho in \cite{F04}. Consider the one-variable multiple polylogarithm $$\Li_{(s_1,\dots,s_r)}(z):=\sum_{n_1>n_2>\cdots>n_r\geq 1}\frac{z^{n_1}}{n_1^{s_1}\cdots n_r^{s_r}},$$ where $(s_1,\dots,s_r)\in \mathbb{N}^r$ and $s_1\geq 2$. We have $$\zeta(s_1,\dots,s_r)=\Li_{(s_1,\dots,s_r)}(z)\mid_{z=1}.$$ We write $\Li_{(s_1,\dots,s_r)}(z)_p$ for the $p$-adic function defined by the same series on $\mathbb{C}_p$. Then $\Li_{(s_1,\dots,s_r)}(z)_p$ converges on the open unit disk centered at $0$. Thus, in the non-archimedean context, it does not make sense to take the limit $z \to 1$. But Furusho \cite{F04} applied Coleman integration \cite{Col82} to $p$-adically analytically continue $\Li_{(s_1,\dots,s_r)}(z)_p$ to $\mathbb{C}_p\setminus\{1\}$, and then took a certain limit $z \to 1$ to define the $p$-adic MZV $\zeta(s_1,\dots,s_r)_p$. These $p$-adic MZVs have features in common with real-valued MZVs; for example, it is shown in \cite{FJ07} that $p$-adic MZVs satisfy the same regularized double shuffle relations as real-valued MZVs satisfy \cite{IKZ06}.
    
    Furusho mentioned the question of whether his $p$-adic MZVs have integral values, that is, whether $\zeta(s_1,\dots,s_r)_p\in\mathbb{Z}_p$ for all primes $p$ and $(s_1,\dots,s_r)\in\mathbb{N}^r$. Recently, this question was answered by Akagi-Hirose-Yasuda and Chatzistamatiou.
    \begin{theorem}[\cite{AHY,Cha17}]\label{p-adic integrality}
        Every $p$-adic MZV is a $p$-adic integer. Moreover, fix an index $(s_1\dots,s_r)\in\mathbb{N}^r$; then for all but finitely many primes $p$, the $p$-adic valuation of $p$-adic MZVs is greater than the weight $\sum_{i=1}^{r}s_i$.
    \end{theorem}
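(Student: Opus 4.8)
The plan is to prove integrality by induction on the weight $w:=\wt(\fs)$, exploiting the Frobenius functional equation satisfied by Coleman's $p$-adic iterated integrals. I regard $\Li_{\fs}(z)_p$ as the Coleman iterated integral from $0$ to $z$ of the differentials $\omega_0=dz/z$ and $\omega_1=dz/(1-z)$ on $\PP^1\setminus\{0,1,\infty\}$, so that $\zeta(s_1,\dots,s_r)_p$ is the tangentially regularized limit as $z\to 1$. The starting point is that a lift of Frobenius, e.g.\ $\varphi\colon z\mapsto z^p$, transforms these differentials by $\varphi^\ast\omega_0=p\,\omega_0$ and by an explicit partial-fraction expression for $\varphi^\ast\omega_1$; pulling the iterated integrals back along $\varphi$ therefore yields a functional equation expressing $\Li_{\fs}(z^p)_p$ as an explicit $\ZZ_p$-linear combination, carrying visible powers of $p$, of products of lower-weight $p$-adic multiple polylogarithms $\Li_{\mathfrak t}(z)_p$ and lower-weight $p$-adic MZVs.

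Next I would evaluate this functional equation at the tangential fixed point $z=1$ of $\varphi$. Because the weight-$w$ term on both sides matches up to a factor $p^{a}$, with $a\geq 1$ determined by the word and coming from the $\varphi^\ast\omega_0=p\,\omega_0$ scalings, the evaluation produces a self-referential identity of the shape $(1-p^{a})\,\zeta(\fs)_p=D$, where $D$ is an explicit $\ZZ_p$-combination of products of strictly-lower-weight $p$-adic MZVs, each decorated by an explicit power of $p$. Since $1-p^{a}$ is a $p$-adic unit, solving for $\zeta(\fs)_p$ shows at once that it lies in $\ZZ_p$ provided the lower-weight values do; the base case $r=1$ is handled by Coleman's explicit distribution relation for the single $p$-adic polylogarithm. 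This establishes the first assertion by induction on $w$.

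For the refined bound I would track the $p$-powers in $D$ quantitatively. The induction hypothesis, sharpened to $\ord_p\big(\zeta(\mathfrak t)_p\big)>\wt(\mathfrak t)$ for almost all $p$, combines with the explicit $p$-power factors attached to each monomial of $D$ to force $\ord_p(D)>w$ outside a finite set of primes; dividing by the unit $1-p^{a}$ preserves valuations and gives $\ord_p\big(\zeta(\fs)_p\big)>w$ for all but finitely many $p$. The main obstacle, and the source of the finite exceptional set, is the tangential regularization at $z=1$: there the naive series diverges, so one must use Coleman's regularized limit and control the denominators introduced by the regularization uniformly in $p$, together with the finitely many primes where the partial-fraction coefficients or the unit $1-p^{a}$ fail to be $p$-integral. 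Making the Frobenius functional equation fully explicit with correct integral bookkeeping is the crux; the two cited proofs realize this differently, Chatzistamatiou through an integral model of the crystalline fundamental group of $\PP^1\setminus\{0,1,\infty\}$ over $\ZZ_p$ and an induction on the weight filtration, and Akagi--Hirose--Yasuda through congruences relating $p$-adic MZVs to finite and symmetric MZVs.
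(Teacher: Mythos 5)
This statement is quoted background, not a result of the paper: it is cited from Akagi--Hirose--Yasuda and Chatzistamatiou precisely because the paper does not prove it, so there is no internal argument of the paper to compare your proposal against. What the paper does prove is the function-field analogue (Theorem 4.2.1 / \ref{Main result}), by an entirely different mechanism: a logarithmic interpretation of $v$-adic CMSPLs via Anderson $t$-modules and a direct valuation estimate, with no Frobenius functional equation in sight.

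Judged on its own terms, your sketch correctly names the Frobenius-pullback strategy (the route of Chatzistamatiou and of Jarossay's computations, rather than the finite/symmetric-MZV congruences of Akagi--Hirose--Yasuda), but the two steps that carry the entire content of the theorem are asserted rather than argued. First, you claim the pullback along $\varphi(z)=z^p$ yields an identity whose weight-$w$ part is the single self-referential term $p^{a}\zeta(\fs)_p$, the rest being ``an explicit $\ZZ_p$-linear combination'' of lower-weight values. In depth greater than one the Frobenius equation also produces same-weight terms that are products of lower-depth constants, so an induction on weight alone does not close; and, more seriously, the $p$-integrality of the coefficients in that combination is exactly the theorem, not an available input. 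Chatzistamatiou's work exists precisely to build a $\ZZ_p$-integral model of the de Rham fundamental groupoid of $\PP^1\setminus\{0,1,\infty\}$ on which Frobenius acts with controlled denominators; without such a construction your quantity $D$ has no a priori reason to lie in $\ZZ_p$, and dividing by the unit $1-p^{a}$ proves nothing. Second, the tangential regularization at $z=1$ and the determination of the exceptional set of primes for the refined bound are flagged as ``the crux'' and then deferred to the very papers being reproved. As written, the proposal is a plausible roadmap with the hard steps replaced by citations.
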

    
    Now we describe an application of Theorem \ref{p-adic integrality}, given in \cite{AHY}. Consider the $\mathbb{Q}$-algebra $\mathcal{A}:=(\prod_p\mathbb{Z}/p\mathbb{Z})\otimes_\mathbb{Z}\mathbb{Q}$ where $p$ runs over all primes $p$. Kaneko and Zagier defined \emph{finite multiple zeta values} (abbreviated as FMZVs) by $$\zeta_\mathcal{A}(s_1,\dots,s_r):=(\zeta_\mathcal{A}(s_1\dots,s_r)_p)_p\in\mathcal{A},$$ where the $p$-component $\zeta_\mathcal{A}(s_1,\dots,s_r)_p$ is defined by $$\sum_{p>n_1>n_2>\cdots>n_r\geq 1}\frac{1}{n_1^{s_1}\dots n_r^{s_r}}\mbox{ mod }p.$$ Let $w\in\mathbb{N}$, $Z_{w,\mathcal{A}}$ be the $\mathbb{Q}$-vector space generated by all FMZVs of weight $w$, and $Z_w$ be the $\mathbb{Q}$-vector space generated by all MZVs of weight $w$. Kaneko and Zagier made the following conjecture:
    \begin{conjecture}[Kaneko-Zagier]\label{Zagier's conjecture}
        The following identity holds: $$\frac{1}{1-X^2-X^3}=\sum_{w\geq 0}(\dim_\mathbb{Q}Z_w)X^w.$$ Moreover, if we set $d_w:=\dim_\mathbb{Q}Z_w$, then for each $w\geq 2$ we have $$\dim_\mathbb{Q}Z_{w,\mathcal{A}}=d_w-d_{w-2}.$$
    \end{conjecture}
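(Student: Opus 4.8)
The plan is to treat the two identities separately and, in each case, to lift the real multiple zeta values to their motivic counterparts, where the dimension count becomes accessible. First note that the generating-function identity is equivalent to the recursion $d_w = d_{w-2} + d_{w-3}$ for $w \ge 3$, together with $d_0 = d_2 = 1$ and $d_1 = 0$; these $d_w$ count the compositions of $w$ into parts $2$ and $3$, which already suggests that a spanning set indexed by such compositions is the right target. Accordingly I would split the claim $\dim_\QQ Z_w = d_w$ into the upper bound $\dim_\QQ Z_w \le d_w$ and the reverse inequality $\dim_\QQ Z_w \ge d_w$.

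For the upper bound I would replace each $\zeta(\fs)$ by a motivic period $\zeta^{\mathfrak{m}}(\fs)$ in the graded Hopf algebra $\mathcal{H}$ of motivic multiple zeta values attached to mixed Tate motives over $\ZZ$ (Deligne--Goncharov), so that the period realization $\zeta^{\mathfrak{m}}(\fs) \mapsto \zeta(\fs)$ reduces everything to bounding $\dim_\QQ \mathcal{H}_w$. The Tannakian fundamental group of $\mathrm{MT}(\ZZ)$ is $\Gm \ltimes U$ with $\Lie(U)$ free on one generator in each odd degree $\ge 3$; computing the associated Hilbert series gives $\sum_w (\dim_\QQ \mathcal{H}_w) X^w = 1/(1 - X^2 - X^3)$. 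Brown's theorem, that the $\zeta^{\mathfrak{m}}(s_1,\dots,s_r)$ with all $s_i \in \{2,3\}$ span $\mathcal{H}$, then both matches the combinatorial recursion and yields $\dim_\QQ Z_w \le d_w$ after applying the period map.

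The reverse inequality $\dim_\QQ Z_w \ge d_w$ is the main obstacle, and is precisely what keeps the statement a conjecture: it amounts to the injectivity of the period map on motivic MZVs, i.e.\ the absence of any $\QQ$-linear relations among the real numbers $\zeta(\fs)$ beyond those already visible motivically. This is a transcendence assertion following from Grothendieck's period conjecture, and no unconditional argument is known; the available machinery (double shuffle, associator, and regularization relations) only produces relations and cannot certify that the list is complete.

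For the second identity I would pass through finite and symmetric multiple zeta values. The Kaneko--Zagier correspondence predicts a $\QQ$-algebra isomorphism between $Z_{\cA}$ and the symmetric MZVs, and the latter are expected to realize the de Rham quotient $\mathcal{H}/\zeta^{\mathfrak{m}}(2)\mathcal{H}$ of the motivic algebra. Granting the first identity, this quotient has Hilbert series $(1 - X^2)/(1 - X^2 - X^3)$, whose coefficient of $X^w$ is $d_w - d_{w-2}$. Establishing these comparison isomorphisms --- deep inputs governed by the motivic coaction, and themselves currently conjectural --- would give $\dim_\QQ Z_{w,\cA} = d_w - d_{w-2}$, so that the genuine obstruction is once more the motivic injectivity used for the first part.
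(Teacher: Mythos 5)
The statement you were asked about is Conjecture~\ref{Zagier's conjecture} in the paper: it is stated without proof, serves purely as motivation for the function-field results, and the only ingredient the paper records toward it is the Akagi--Hirose--Yasuda/Chatzistamatiou upper bound $\dim_\mathbb{Q}Z_{w,\mathcal{A}}\leq d_w-d_{w-2}$ quoted as Theorem~\ref{upperbound of FMZV}. So there is no proof in the paper against which to compare your argument, and indeed no proof exists in the literature.

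Your proposal handles this honestly and accurately. The reduction of the generating function to the recursion $d_w=d_{w-2}+d_{w-3}$, the upper bound $\dim_\mathbb{Q}Z_w\leq d_w$ via the motivic Hopf algebra of $\mathrm{MT}(\mathbb{Z})$ together with Brown's theorem on the Hoffman elements $\zeta^{\mathfrak{m}}(s_1,\dots,s_r)$ with $s_i\in\{2,3\}$, and the identification of the missing step as the injectivity of the period map (a consequence of Grothendieck's period conjecture) are all correct and represent the current state of the art. Likewise your route to the second identity through symmetric MZVs and the quotient $\mathcal{H}/\zeta^{\mathfrak{m}}(2)\mathcal{H}$, with Hilbert series $(1-X^2)/(1-X^2-X^3)$, correctly locates both the unconditional half (the upper bound, which is exactly Theorem~\ref{upperbound of FMZV}) and the conjectural half. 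In short: you have not proved the statement, but no one has, and you have correctly isolated where the genuine obstruction lies rather than papering over it. The one thing to make explicit if this were written up is that what you call the ``Kaneko--Zagier correspondence'' between $\mathcal{A}$-finite and symmetric MZVs is itself a conjecture, so the second identity is conditional on two independent open statements, not one.
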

    Akagi-Hirose-Yasuda combine the integrality of $p$-adic MZVs and the special case of Jarossay's result \cite[(0.3.8)]{J18} to give the upper bound of above conjecture for FMZVs.
    \begin{theorem}[\cite{AHY}]\label{upperbound of FMZV}
        For each integer $w\geq 2$, we have $\dim_\mathbb{Q}Z_{w,\mathcal{A}}\leq d_w-d_{w-2}$.
    \end{theorem}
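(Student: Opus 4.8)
The plan is to transfer the problem to the $p$-adic setting, where Theorem \ref{p-adic integrality} can be exploited, and the key bridge is Jarossay's formula \cite[(0.3.8)]{J18} relating finite and $p$-adic MZVs. First I would record that formula: for a fixed index $\mathfrak{s}$ with $\wt(\mathfrak{s})=w$, it expresses the $p$-component $\zeta_\mathcal{A}(\mathfrak{s})_p$ as the reduction modulo $p$ of a single $p$-independent $\mathbb{Q}$-linear combination of $p$-adic MZVs, whose top part consists of weight-$w$ values and whose remaining terms have lower weight but are multiplied by strictly positive powers of $p$. I would isolate the weight-$w$ piece, which carries all the relevant information.

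Next I would invoke Theorem \ref{p-adic integrality} to legitimise this reduction and to discard the lower-order terms. Integrality, $\zeta(\cdot)_p\in\mathbb{Z}_p$, ensures that for all but finitely many $p$ every $p$-adic MZV occurring lies in $\mathbb{Z}_p$, so reduction modulo $p$ is defined; the sharper valuation estimate then guarantees that the lower-weight correction terms, being multiplied by positive powers of $p$, vanish modulo $p$ for almost all $p$. Hence for almost all $p$ the class $\zeta_\mathcal{A}(\mathfrak{s})_p$ is the mod-$p$ reduction of a fixed weight-$w$ combination of $p$-adic MZVs. Packaging these congruences over all $p$, every finite MZV of weight $w$ lies in the image, under componentwise reduction, of the $\mathbb{Q}$-span of weight-$w$ $p$-adic MZVs.

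It then remains to bound that span. Here I would use that $p$-adic MZVs satisfy the regularised double shuffle relations \cite{FJ07}, and more decisively that they arise as the $p$-adic realisation of motivic MZVs, under which the even motivic zeta $\zeta^{\mathfrak{m}}(2)$ (equivalently $\zeta(2)_p$) dies. Thus the span of weight-$w$ $p$-adic MZVs is a quotient of the weight-$w$ part of the motivic MZV algebra modulo the ideal generated by $\zeta^{\mathfrak{m}}(2)$. By Brown's structure theorem this algebra is free over $\mathbb{Q}[\zeta^{\mathfrak{m}}(2)]$, so the weight-$w$ part of the quotient has dimension exactly $d_w-d_{w-2}$. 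Combining this with the previous paragraph, $Z_{w,\mathcal{A}}$ is a quotient of a space of dimension at most $d_w-d_{w-2}$, which gives the claimed inequality.

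The main obstacle is this last step. Jarossay's formula and the integrality theorem together only accomplish the comparatively formal passage from the finite to the $p$-adic side; the genuinely deep input is the dimension bound $d_w-d_{w-2}$ for $p$-adic (equivalently, motivic-modulo-$\zeta(2)$) MZVs. Establishing it requires the hard upper bound $\dim_\mathbb{Q}Z_w\le d_w$ of Deligne--Goncharov and Terasoma together with Brown's freeness of the motivic MZV algebra over $\mathbb{Q}[\zeta^{\mathfrak{m}}(2)]$, the latter being exactly what makes killing $\zeta(2)$ remove a clean copy of the weight-$(w-2)$ part. I would therefore spend most of the effort making precise that the $p$-adic realisation factors through this quotient and does not lose any further dimension beyond what integrality already records.
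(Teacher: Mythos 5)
The paper does not actually prove this statement: it is quoted from \cite{AHY} (listed as ``in preparation''), and the paper's one-sentence description of that argument --- combining the integrality of $p$-adic MZVs (Theorem~\ref{p-adic integrality}) with the special case of Jarossay's formula \cite[(0.3.8)]{J18} expressing finite MZVs through $p$-adic MZVs --- is exactly the strategy you outline. Your sketch, including the passage from finite to $p$-adic MZVs, the use of integrality to discard the lower-weight terms modulo $p$, and the final bound $d_w-d_{w-2}$ for the span of weight-$w$ $p$-adic MZVs obtained by killing $\zeta(2)$ in the motivic picture, matches the cited approach, so there is nothing in the paper to compare it against beyond this agreement of method.
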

    It is natural to ask whether or not the same phenomena occur in the characteristic $p$ analogue. The main purpose of this article is to study the function field analogue of Theorem~\ref{p-adic integrality} and try to build up a suitable framework for future study of the function field analogue of Theorem~\ref{upperbound of FMZV}.
    
\subsection{Multiple zeta values in positive characteristic}
    The function field analogue of real-valued MZVs were defined by Thakur in \cite{T04}, generalizing Carlitz zeta values \cite{Ca35}. Let $A:=\mathbb{F}_q[\theta]$, $k:=\mathbb{F}_q(\theta)$, and define $k_\infty$ to be the completion of $k$ at the infinite place denote by $\infty$. For any index $(s_1,\dots,s_r)\in\mathbb{N}^r$, the $\infty$-adic MZV is defined by the series $$\zeta_A(s_1,\dots,s_r):=\sum\frac{1}{a_1^{s_1}\cdots a_r^{s_r}}\in k_\infty,$$ where $(a_1,\dots,a_r)\in A^r$ with $a_i$ monic and $\deg_\theta a_i$ strictly decreasing. In \cite{T09} Thakur showed that $\zeta_A(s_1,\dots,s_r)$ is non-vanishing for all indices $(s_1,\dots,s_r)\in\mathbb{N}^r$. For the transcendence problem, Yu \cite{Yu91} proved that all single zeta values $\zeta_A(s_1)$ are transcendental and Chang \cite{C14} proved the transcendence of $\zeta_A(s_1,\dots,s_r)$ for all indices $(s_1,\dots,s_r)$. This problem remains open for real-valued MZVs in classical transcendence theory. Some interesting features of real-valued MZVs remain valid for their characteristic $p$ counterpart. For example, Terasoma \cite{Te02} and Goncharov \cite{Gon02} showed that real-valued MZVs are periods of mixed Tate motives, and Anderson-Thakur \cite{AT09} showed that $\infty$-adic MZVs appear as periods of $t$-motives.
    
    Inspired by Furusho's definition of $p$-adic MZVs, Chang and Mishiba considered the Carlitz multiple star polylogarithms (abbreviated as CMSPLs) $$\Li^\star_{(s_1,\dots,s_r)}(z_1,\dots,z_r):=\underset{i_1\geq\cdots\geq i_r\geq 0}{\sum}\frac{z_1^{q^{i_1}}\cdots z_r^{q^{i_r}}}{L_{i_1}^{s_1}\cdots L_{i_r}^{s_r}},$$ where $(s_1,\dots,s_r)\in\mathbb{N}^r$, $L_0:=1$ and $L_i:=(\theta-\theta^q)\cdots(\theta-\theta^{q^i})$ for $i\geq 1$. Chang and Mishiba \cite[Thm.~5.2.5]{CM19b} proved that $\infty$-adic MZVs can be written as a $k$-linear combinations of CMSPLs at some precise integral points with explicit coefficients. Let $v$ be a fixed finite place of $k$ and regard $\Li_{(s_1,\dots,s_r)}^\star(z_1,\dots,z_r)_v$ as a $v$-adic function defined by the same series on $\mathbb{C}_v^r$ where $\mathbb{C}_v$ is the completion of an algebraic closure of the completion of $k$ at $v$. Then the series converges on the open unit disk centered at $0$. Chang and Mishiba \cite[Prop.~4.1.1]{CM19a} use the logarithmic interpretation to do the analytic continuation of CMSPLs $v$-adically. Moreover, they defined $v$-adic MZVs in \cite[Def.~6.1.1]{CM19b} by using the same $k$-linear combinations of CMSPLs, viewed $v$-adically, and they used Yu's sub-$t$-module theory \cite{Yu97} to prove that $v$-adic MZVs satisfy the same $k$-linear relations as the corresponding $\infty$-adic MZVs do. This result can be viewed as a positive answer of the function field analogue of Furusho's conjecture, which predicted that the $p$-adic MZVs satisfy the same $\mathbb{Q}$-linear relations that the corresponding real-valued MZVs satisfy, and this conjecture remains open in the classical theory. These $v$-adic MZVs are the main objects in the study of this paper.

\subsection{Outline and main result}
    In Section $2$, we fix our notation and setting, then briefly review the terminology of $t$-modules as in \cite{A86}.
    
    In the section $3$, we first follow \cite{CM19a} closely to review the logarithmic interpretation and $v$-adic analytic continuation of CMSPLs. Based on the logarithmic interpretation and the functional equation for logarithms of $t$-modules, it is reasonable to establish certain functional equation for CMSPLs arising from logarithm of $t$-modules. But the difficulty here is that the logarithmic interpretation for CMSPLs rely on specific evaluation for the logarithm of $t$-modules, and this situation may not hold after analytic continuation. We overcome this issue in Section $3$. As an application, we study the $v$-adic valuation of $v$-adic CMSPLs and prove that the $k$-vector space generated by the values of $v$-adic CMSPLs forms an algebra. The latter result is clear for $\infty$-adic CMSPLs by stuffle relations, but the stuffle relations are still unclear for the $v$-adic analytic continuation of CMSPLs. Here, we prove it by the functional equation to avoid the use of stuffle relations.
    
    In Section $4$, we first follow \cite{CM19b} closely to construct $v$-adic MZVs via $v$-adic CMSPLs. Then we explain how to arrive our main result via the properties of $v$-adic CMSPLs obtained in Section $3$. Our main result, stated as Theorem~\ref{Main result} later, is the following:
    \begin{theorem}
        Let $\fs=(s_1,\cdots,s_r)\in\mathbb{N}^r$ and $q_v$ be the cardinality of the residue field $A/vA$. If we set $$B_{w,v}:=\min_{n\geq 0}\{q_v^n-n\cdot w\},$$ then we have
        $$\ord_v(\zeta_A(\fs)_v)\geq B_{\wt(\fs),v}-\frac{\wt(\fs)-\dep(\fs)-\height(\fs)}{q_v-1}.$$
        In particular, $$\zeta_A(\fs)_v\in A_v~\mbox{ if }q_v\geq\wt(\fs).$$
    \end{theorem}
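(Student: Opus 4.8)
The plan is to combine the logarithmic interpretation of the $v$-adic CMSPLs with the elementary $v$-adic valuation of the Carlitz factorials $L_i$. Writing $d:=\deg v$, so that $q_v=q^d$, the starting point is the identity $\ord_v(L_i)=\lfloor i/d\rfloor$, which follows at once from the fact that $\theta^{q^j}-\theta$ is the product of all monic irreducibles of $A$ of degree dividing $j$, so that $v\mid\theta^{q^j}-\theta$ (to first order) precisely when $d\mid j$. Consequently the $(i_1,\dots,i_r)$-term of $\Li^\star_{(s_1,\dots,s_r)}(z_1,\dots,z_r)$ has $v$-adic valuation $\sum_l q^{i_l}\ord_v(z_l)-\sum_l s_l\lfloor i_l/d\rfloor$, and for integral arguments it is the negative denominator contribution $-\sum_l s_l\lfloor i_l/d\rfloor$ that must be controlled.

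First I would record, following the $v$-adic construction of the MZVs (the $v$-adic incarnation of \cite[Thm.~5.2.5, Def.~6.1.1]{CM19b}), that $\zeta_A(\fs)_v$ is a fixed $k$-linear combination of $v$-adic CMSPL values at explicit integral points, with explicitly bounded coefficient valuations. This reduces the theorem to a uniform lower bound for $\ord_v\big(\Li^\star_{\fs'}(\mathbf{z})_v\big)$ at those points. Here $\mathbf{z}$ has unit coordinates, so the defining series lies outside its disk of convergence, and one must work with the $v$-adic analytic continuation of \cite[Prop.~4.1.1]{CM19a}, that is, with a coordinate of $\log_E$ for the $t$-module $E$ (a tensor construction of dimension $\wt(\fs')$) attached to $\fs'$.

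The heart of the argument is the valuation estimate for this continued value. I would first establish the functional equation for the continued CMSPL that descends from $\log_E\circ\,\rho_a=(d\rho_a)\circ\log_E$ for $a\in A$; this is exactly what repairs the defect that the logarithmic interpretation is only literally valid at special points inside the convergence disk. Using the functional equation to transport the evaluation into the analytic domain and then estimating term by term with $\ord_v(L_i)=\lfloor i/d\rfloor$, the dominant contributions organize by the Frobenius level $n=\lfloor i/d\rfloor$: the level-$n$ contribution to the valuation is $q_v^{n}-n\,\wt(\fs')$, and optimizing over $n$ produces exactly the envelope $B_{\wt(\fs'),v}=\min_{n\ge 0}\{q_v^{n}-n\,\wt(\fs')\}$. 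A finer analysis inside each Frobenius block, which is where the off-diagonal (nilpotent) part of $d\rho_\theta$ and the star/depth/height combinatorics of $\fs'$ enter, accounts for the correction $-(\wt(\fs')-\dep(\fs')-\height(\fs'))/(q_v-1)$. Along the way I would also verify, via the functional equation rather than stuffle, that the $k$-span of these values is an algebra, so that the combination from the previous step stays within the same valuation regime.

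Finally, the ``in particular'' clause follows by a clean elementary check. Since each $\zeta_A(\fs)_v$ lies in $k_v$, its valuation is an integer, so it suffices to show the displayed lower bound exceeds $-1$ whenever $q_v\ge\wt(\fs)$. When $q_v>\wt(\fs)$ one computes $B_{\wt(\fs),v}=1$ and $\wt(\fs)-\dep(\fs)-\height(\fs)\le\wt(\fs)-1<q_v-1$, so the bound is nonnegative; when $q_v=\wt(\fs)$ one has $B_{\wt(\fs),v}=0$, while $\dep(\fs)+\height(\fs)\ge 2$ (the sole case $\dep=1,\ \height=0$ forces $\fs=(1)$ and $\wt=1$, incompatible with $q_v=\wt$), whence $(\wt(\fs)-\dep(\fs)-\height(\fs))/(q_v-1)<1$ and the bound is $>-1$; either way $\ord_v(\zeta_A(\fs)_v)\ge 0$. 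The main obstacle throughout is the analytic continuation: both establishing the functional equation for the continued CMSPLs and extracting a sharp valuation bound from a series that diverges at the relevant unit points, and this is the step I expect to require the most care.
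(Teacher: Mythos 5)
Your overall architecture matches the paper's: reduce $\zeta_A(\fs)_v$ to the explicit $k$-linear combination of $v$-adic CMSPL values from the definition, use the functional equation descending from $\log_G\circ\rho_a=\partial\rho_a\circ\log_G$ to rewrite each analytically continued CMSPL as a $k$-linear combination of CMSPLs with first argument in $v A$ (hence given by a convergent series), estimate that series termwise via $\ord_v(L_i)=\lfloor i/\epsilon_v\rfloor$ to get the envelope $B_{w,v}=\min_{n\ge 0}\{q_v^n-nw\}$, and finish the integrality claim by noting the valuation is an integer exceeding $-1$. All of that is sound and is essentially the paper's route (Theorem \ref{functional equation}, Corollary \ref{lower bound of CMPSL's}, and the proof of Theorem \ref{Main result}).

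The genuine gap is your account of the correction term $-(\wt(\fs)-\dep(\fs)-\height(\fs))/(q_v-1)$. You attribute it to ``a finer analysis inside each Frobenius block'' of the logarithm, involving the nilpotent part of $\partial\rho_t$ and the star combinatorics. That is not where it comes from, and no such analysis is needed or would produce it: the CMSPL values themselves satisfy the clean bound $\ord_v(\Li^\star_{\fs_\ell}(\bu_\ell)_v)\ge B_{\wt(\fs),v}$ with no correction, and the coefficients $b_\ell$ are powers of $\theta$, hence $v$-adically integral. The correction enters solely because the definition of $\zeta_A(\fs)_v$ carries the factor $1/\Gamma_\fs$, and $\Gamma_\fs=\Gamma_{s_1}\cdots\Gamma_{s_r}$ can have positive $v$-adic valuation. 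The bound
$$\ord_v(\Gamma_\fs)\le\frac{\wt(\fs)-\dep(\fs)-\height(\fs)}{q_v-1}$$
is an elementary computation (Proposition \ref{Basic_Estimation}) from $\Gamma_{s}=\prod_j D_j^{n_j}$ with $s-1=\sum_j n_jq^j$ and the explicit formula $\ord_v(D_j)=q^{\beta}(q_v^{\alpha}-1)/(q_v-1)$ for $j=\alpha\epsilon_v+\beta$; the depth and height appear through the digit sums of the $s_i-1$, not through the $t$-module structure. Your sketch never isolates this Gamma factor (you only say ``explicitly bounded coefficient valuations''), so as written the correction term is unaccounted for. Once you replace the Frobenius-block claim with the $\ord_v(\Gamma_\fs)$ estimate, the rest of your argument, including the case analysis for $q_v\ge\wt(\fs)$, goes through.
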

    We will provide an example which shows that certain conditions for the integrality of $v$-adic MZVs are necessary.
    
    In Section $5$, we formulate a new framework, the \emph{adelic} MZVs, based on our integrality result. We develop some properties of adelic MZVs and we hope that they may be helpful for the later study of the function field analogue of Theorem~\ref{upperbound of FMZV}.

\section{Preliminaries}
\subsection{Notations}

    \begin{itemize}
		\setlength{\leftskip}{0.8cm}
        \setlength{\baselineskip}{18pt}
		\item[$\mathbb{F}_q$] :=  A finite field with $q$ elements, for $q$ a power of a prime number $p$.
		\item[$A$] :=  $\mathbb{F}_q[\theta]$, the polynomial ring in the variable $\theta$ over $\mathbb{F}_q$.
		\item[$v$] := A monic, irreducible polynomial in $A$.
		\item[$\epsilon_v$] := deg$_\theta(v)$, the degree of $v$ with respect to $\theta$.
		\item[$q_v$] := $q^{\epsilon_v}$, the cardinality of the residue field $A/vA$.
		\item[$k$] :=  $\mathbb{F}_q(\theta)$, the fraction field of $A$.
		\item[$|\cdot|_v$] :=  The normalized absolute value on $k$ such that $|v|_v=q_v^{-1}$.
		\item[$|\cdot|_\infty$] :=  The normalized absolute value on $k$ such that $|\theta|_\infty=q$.
		\item[ord$_v(\cdot)$] := The associated valuation of $|\cdot|_v$.
		\item[ord$_\infty(\cdot)$] := The associated valuation of $|\cdot|_\infty$.
		\item[$k_v$] :=  The completion of $k$ with respect to $|\cdot|_v$.
		\item[$k_\infty$] :=  The completion of $k$ with respect to $|\cdot|_\infty$.
		\item[$A_v$] :=  The valuation ring inside $k_v$.
		\item[$A_\infty$] :=  The valuation ring inside $k_\infty$.
		\item[$\mathbb{C}_v$] :=  The completion of an algebraic closure of $k_v$.
		\item[$\mathbb{C}_\infty$] :=  The completion of an algebraic closure of $k_\infty$.
		\item[$\Bar{k}$] :=  A fixed algebraic closure of $k$ with fixed embeddings into $\mathbb{C}_v$ and $\mathbb{C}_\infty$ respectively.
		\item[$\wt(\fs)$] :=  $s_1+\cdots+s_r$, the weight of $\fs:=(s_1,\cdots,s_r)\in\mathbb{N}^r$.
		\item[$\dep(\fs)$] :=  $r$, the depth of $\fs:=(s_1,\cdots,s_r)\in\mathbb{N}^r$.
	\end{itemize}

\subsection{Basic setting}
    In this section, we briefly recall some basic objects which are fundamental for the arithmetic of $A$. First, we set $D_0:=1$, $L_0:=1$ and for $i\in\mathbb{N}$, we set 
    $$[i]:=\theta^{q^i}-\theta\in A, D_i:=[i][i-1]^q\cdots[1]^{q^{i-1}}\mbox{ and } L_i:=(-1)^i[i][i-1]\cdots [1].$$ 
    Next, we recall the Carlitz factorials. For each non-negative integer $n$, we write $$n=\underset{j\geq 0}{\sum}n_jq^j,~0\leq n_j\leq q-1.$$ The Carlitz factorial is defined by $$\Gamma_{n+1}:=\underset{j}{\prod}D_j^{n_j}\in A.$$ Given an index $\fs=(s_1,\cdots,s_r)\in\mathbb{N}^r$, we further define $\Gamma_\fs:=\Gamma_{s_1}\cdots\Gamma_{s_r}$. 
    For these objects, we have the following basic but useful proposition on its $v$-adic valuation.
    \begin{proposition}\label{Basic_Estimation}
        The following assertions hold:
        \begin{enumerate}
            \item Let $i\in\mathbb{Z}_{\geq 0}$. If we write $i=\alpha\cdot \epsilon_v+\beta$, $\alpha,\beta\in\mathbb{Z}_{\geq 0}$ and $0\leq\beta<\epsilon_v$, then $$\ord_v(D_i)=q^\beta\cdot\frac{q_v^{\alpha}-1}{q_v-1}~\mbox{ and }\ord_v(L_i)=\alpha.$$
            \item Let $s\in\mathbb{N}_{\geq 2}$ and $\fs=(s_1,\dots,s_r)\in\mathbb{N}^r$. Then $\ord_v(\Gamma_1)=0$, $$\ord_v(\Gamma_s)\leq \frac{s-2}{q_v-1}~\mbox{ and }\ord_v(\Gamma_\fs)\leq\frac{\wt(\fs)-\dep(\fs)-\height(\fs)}{q_v-1}.$$
        \end{enumerate}
    \end{proposition}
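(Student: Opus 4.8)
The plan is to reduce every quantity to the $v$-adic valuation of the single building block $[i]=\theta^{q^i}-\theta$ and then assemble $L_i$, $D_i$, and the Carlitz factorials by additivity of $\ord_v$ over products. The essential input is the classical factorization in $A=\mathbb{F}_q[\theta]$: the polynomial $\theta^{q^i}-\theta$ is the product of all monic irreducible polynomials whose degree divides $i$, each occurring exactly once (separability is clear since the derivative is $-1$). Because $v$ is monic irreducible of degree $\epsilon_v$, this immediately gives $\ord_v([i])=1$ when $\epsilon_v\mid i$ and $\ord_v([i])=0$ otherwise.

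For assertion (1), I would write $L_i=(-1)^i\prod_{j=1}^{i}[j]$, so that $\ord_v(L_i)$ counts the multiples of $\epsilon_v$ in $\{1,\dots,i\}$, which equals $\lfloor i/\epsilon_v\rfloor=\alpha$. Likewise $D_i=\prod_{j=1}^{i}[j]^{q^{i-j}}$ yields $\ord_v(D_i)=\sum_{m=1}^{\alpha}q^{\,i-m\epsilon_v}$, the sum running over the multiples $j=m\epsilon_v$; substituting $i=\alpha\epsilon_v+\beta$ and summing the resulting geometric series produces $q^{\beta}(q_v^{\alpha}-1)/(q_v-1)$. It is worth recording this in the equivalent closed form $\ord_v(D_i)=(q^{i}-q^{\beta})/(q_v-1)$, since that is what makes assertion (2) transparent.

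For assertion (2), the equality $\Gamma_1=1$ is immediate from the empty-product convention. For $\Gamma_s$ with $s\geq 2$, expanding $s-1=\sum_j n_jq^{j}$ in base $q$ and using the closed form above gives $\ord_v(\Gamma_s)=\sum_j n_j\ord_v(D_j)=\frac{1}{q_v-1}\bigl((s-1)-\sum_j n_jq^{\beta_j}\bigr)$, where $\beta_j$ is the residue of $j$ modulo $\epsilon_v$. The claimed bound is then exactly equivalent to $\sum_j n_jq^{\beta_j}\geq 1$, which holds because $s\geq 2$ forces some digit $n_j\geq 1$ while each $q^{\beta_j}\geq 1$. Finally, for a general index I would split $\ord_v(\Gamma_{\fs})=\sum_i\ord_v(\Gamma_{s_i})$ according to whether $s_i=1$ or $s_i\geq 2$: the entries equal to $1$ contribute nothing, and bounding the remaining $\height(\fs)$ entries and summing gives $\sum_{s_i\geq 2}(s_i-2)=\wt(\fs)-\dep(\fs)-\height(\fs)$ by a direct count.

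I do not expect a serious obstacle: once the factorization of $\theta^{q^i}-\theta$ is in hand, the whole statement is bookkeeping. The only points that demand care are the correct use of the decomposition $i=\alpha\epsilon_v+\beta$ in the geometric summation for $D_i$, and ensuring that the two sharpened constants---the ``$-2$'' in the $\Gamma_s$ estimate and the height correction for $\Gamma_{\fs}$---are respectively accounted for by the inequality $\sum_j n_jq^{\beta_j}\geq 1$ and by the count of nontrivial entries.
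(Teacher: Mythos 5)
Your proposal is correct and follows essentially the same route as the paper: the factorization of $[i]=\theta^{q^i}-\theta$ into monic irreducibles of degree dividing $i$, the geometric-series evaluation of $\ord_v(D_i)$ and the count $\ord_v(L_i)=\lfloor i/\epsilon_v\rfloor$, the base-$q$ digit expansion of $s-1$ giving $\ord_v(\Gamma_s)=\frac{1}{q_v-1}\bigl((s-1)-\sum_j n_jq^{\beta_j}\bigr)$ with the bound from $\sum_j n_jq^{\beta_j}\geq 1$, and the height correction obtained by separating the entries $s_i=1$ from those with $s_i\geq 2$. The closed form $\ord_v(D_i)=(q^i-q^\beta)/(q_v-1)$ you record is exactly the identity the paper uses implicitly in the $\Gamma_s$ computation.
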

    
    \begin{proof}
        The case $i=0$ is clear, we may assume that $i\geq 1$. Since $[i]$ is the product of all monic irreducible polynomials $f$ in $A$ with deg$(f)$ divides $i$ \cite[Prop.~3.1.6]{Go96}, we have that ord$_v([i])=1$ if and only if $\epsilon_v$ divides $i$. Then direct calculation shows that $$\ord_v(D_i)=\underset{j=1}{\overset{i}{\sum}}q^{i-j}\cdot\ord_v([j])=\underset{j=1}{\overset{\alpha}{\sum}}q^{i-j\cdot \epsilon_v}\cdot\ord_v([j\cdot \epsilon_v])=q^\beta\cdot\frac{q_v^{\alpha}-1}{q_v-1}$$ and $$\ord_v(L_i)=\underset{j=1}{\overset{\alpha}{\sum}}\ord_v([j\cdot \epsilon_v])=\alpha.$$ The first assertion now follows. To prove the second part, we write $n:=s-1=\sum_{j\geq 0}n_jq^j$ and $j=\alpha_j\cdot \epsilon_v+\beta_j$ for each non-negative integer $j$. Then
        \begin{align*}
            \ord_v(\Gamma_{s})&=\underset{j\geq 0}{\sum}n_j\cdot\ord_v(D_j)=\underset{j\geq 0}{\sum}n_j\cdot q^{\beta_j}\cdot\frac{q_v^{\alpha_j}-1}{q_v-1}=\frac{1}{q_v-1}\left(\underset{j\geq 0}{\sum}n_jq^j-\underset{j\geq 0}{\sum}n_jq^{\beta_j}\right)\\
            &\leq\frac{1}{q_v-1}\left(n-\underset{j\geq 0}{\sum}n_j\right)\leq\frac{n-1}{q_v-1}=\frac{s-2}{q_v-1}.
        \end{align*}
        Finally, $$\ord_v(\Gamma_\fs)=\underset{j=1}{\overset{r}{\sum}}\ord_v(\Gamma_{s_j})\leq \left(\underset{j=1}{\overset{r}{\sum}}\frac{s_i-1}{q_v-1}\right)-\frac{\height(\fs)}{q_v-1}=\frac{\mbox{wt}(\fs)-\mbox{dep}(\fs)-\height(\fs)}{q_v-1}.$$
     \end{proof}

\subsection{Anderson's $t$-modules}
    In this section, we quickly review the theory of $t$-modules introduced by Anderson~\cite{A86}. Let $\tau$ be the Frobenius $q$-th power operator $$\tau:=(x\mapsto x^q):\mathbb{C}_v\to\mathbb{C}_v.$$ This $\tau$-action naturally extends to matrices by componentwise action. Let $\mathbb{C}_v[\tau]$ be the non-commutative polynomial ring generated by $\tau$ equipped with the relation $$\tau\alpha=\alpha^q\tau\mbox{ for }\alpha\in\mathbb{C}_v.$$ For any $d$-dimensional additive algebraic group $\mathbb{G}_a^d$ defined over $\mathbb{C}_v$, one may identify the ring of $\mathbb{F}_q$-linear endomorphism of $\mathbb{G}_a^d$ with $\Mat_d(\mathbb{C}_v[\tau])$. We also define the partial differential operator $$\partial:=(\underset{i\geq0}{\sum}\alpha_i\tau^i\mapsto \alpha_0):\mbox{Mat}_d(\mathbb{C}_v[\tau])\to\mbox{Mat}_d(\mathbb{C}_v).$$ Now we are ready to give a precise definition of $t$-modules.
    \begin{definition}
        Let $d\in\mathbb{N}$. A $d$-dimensional $t$-module is a pair $G=(\mathbb{G}_a^d,\rho)$, where $$\rho:\mathbb{F}_q[t]\to \mbox{Mat}_d(\mathbb{C}_v[\tau])$$ is an $\mathbb{F}_q$-linear ring homomorphism so that $\partial\rho_t-\theta I_d$ is a nilpotent matrix.
    \end{definition}
    The \emph{exponential function} of $G$ is an $\mathbb{F}_q$-linear power series of the form $$\exp_G:=I_d+\underset{i\geq1}{\sum}Q_i\tau^i,Q_i\in\mbox{Mat}_d(\mathbb{C}_v).$$ It is the unique power series satisfying the property that $$\exp_G\circ\partial\rho_a=\rho_a\circ\exp_G\mbox{ for all }a\in \mathbb{F}_q[t].$$
    The \emph{logarithm} of $G$ denoted by $\log_G$, is defined to be the formal inverse of $\exp_G$. It is a $\mathbb{F}_q$-linear power series of the form $$\log_G:=I_d+\underset{i\geq1}{\sum}P_i\tau^i,P_i\in\mbox{Mat}_d(\mathbb{C}_v),$$ with the property that $$\log_G\circ\rho_a=\partial\rho_a\circ\log_G\mbox{ for all }a\in \mathbb{F}_q[t].$$
    The logarithm $\log_G$ is one of the most important object for our study as it has a deep connection to CMSPLs and MZVs (see \cite{AT90,CM19a,CM19b}).


\section{$v$-adic Carlitz multiple star polylogarithm}
In this section, we will follow~\cite{CM19a} closely to construct a specific $t$-module $G_{\fs,\bu}$ and an explicit special point $\mathbf{v}_{\fs,\bu}$ to define the $v$-adic Carlitz multiple star polylogarithm (CMSPL). Then we study the properties of the $k$-vector space generated by certain values of $v$-adic CMSPLs with the same weight.

\subsection{Formulation through iterated extension of Carlitz tensor powers}
Throughout this section, we fix $\fs = (s_{1}, \dots, s_{r})\in \NN^{r}$ and $\bu = (u_{1}, \dots, u_{r}) \in (\ok^{\times})^{r}$.
We define the $\fs$-th Carlitz multiple polylogarithm (CMPL) as follows (see~\cite{C14}): $$\Li_\fs(z_1,\dots,z_r):=\underset{i_1>\cdots>i_r\geq 0}{\sum}\frac{z_1^{q^{i_1}}\dots z_r^{q^{i_r}}}{L_{i_1}^{s_1}\cdots L_{i_r}^{s_r}}\in k\llbracket z_1,\cdots,z_r \rrbracket.$$ 
We also define the $\fs$-th Carlitz multiple star polylogarithm (CMSPL) as follows (see~\cite{CM19a}):
$$\Li_\fs^\star(z_1,\dots,z_r):=\underset{i_1\geq\cdots\geq i_r\geq 0}{\sum}\frac{z_1^{q^{i_1}}\cdots z_r^{q^{i_r}}}{L_{i_1}^{s_1}\cdots L_{i_r}^{s_r}}\in k\llbracket z_1,\cdots,z_r \rrbracket.$$ 
We denote by $\Li_\fs(z_1,\cdots,z_r)_v$ and $\Li_\fs^\star(z_1,\cdots,z_r)_v$ when we consider the $v$-adic convergence of those two infinite series.

For $1 \leq \ell \leq r$,
we set $d_{\ell} := s_{\ell} + \cdots + s_{r}$ and $d := d_{1} + \cdots + d_{r}$.
Let $B$ be a $d \times d$-matrix of the form

\[
\left( \begin{array}{c|c|c}
B[11] & \cdots & B[1r] \\ \hline
\vdots & & \vdots \\ \hline
B[r1] & \cdots & B[rr]
\end{array} \right)
\]
where $B[\ell m]$ is a $d_{\ell} \times d_{m}$-matrix for each $\ell$ and $m$.
We call $B[\ell m]$ is the $(\ell, m)$-th block sub-matrix of $B$.

For $1 \leq \ell \leq m \leq r$, we set

\[
N_{\ell} := \left(
\begin{array}{ccccc}
0 & 1 & 0 & \cdots & 0 \\
& 0 & 1 & \ddots & \vdots \\
& & \ddots & \ddots & 0 \\
& & & \ddots & 1 \\
& & & & 0
\end{array}
\right)
\in \Mat_{d_{\ell}}(\ok),
\]

\[
N := \left(
\begin{array}{cccc}
N_{1} & & & \\
& N_{2} & & \\
& & \ddots & \\
& & & N_{r}
\end{array}
\right)
\in \Mat_{d}(\ok),
\]

\[
E[\ell m] := \left(
\begin{array}{cccc}
0 & \cdots & \cdots & 0 \\
\vdots & \ddots & & \vdots \\
0 & & \ddots & \vdots \\
1 & 0 & \cdots & 0
\end{array}
\right)
\in \Mat_{d_{\ell} \times d_{m}}(\ok) \ \ \ (\mathrm{if} \ \ell = m),
\]

\[
E[\ell m] := \left(
\begin{array}{cccc}
0 & \cdots & \cdots & 0 \\
\vdots & \ddots & & \vdots \\
0 & & \ddots & \vdots \\
(-1)^{m-\ell} \prod_{e=\ell}^{m-1} u_{e} & 0 & \cdots & 0
\end{array}
\right)
\in \Mat_{d_{\ell} \times d_{m}}(\ok) \ \ \ (\mathrm{if} \ \ell < m),
\]

\[
E := \left(
\begin{array}{cccc}
E[11] & E[12] & \cdots & E[1r] \\
& E[22] & \ddots & \vdots \\
& & \ddots & E[r-1,r] \\
& & & E[rr]
\end{array}
\right)
\in \Mat_{d}(\ok).
\]

Also, we define $\mathbb{1}_\ell:=(\delta_{\ell,j})_{1\leq j\leq d}$ to be the vector in $\Mat_{1\times d}(\Bar{k})$ where $\delta_{\ell,j}=1$ if $\ell=j$ and $\delta_{\ell,j}=0$ otherwise. Finally, we define the $t$-module $G_{\fs, \bu} := (\GG_{a}^{d}, \rho)$ by
\begin{equation}\label{E:Explicit t-moduleCMPL}
  \rho_{t} = \theta I_{d} + N + E \tau
  \in \Mat_{d}(\ok[\tau]).
\end{equation}
Note that $G_{\fs,\bu}$ depends  only on $u_{1},\ldots,u_{r-1}$.

Let
\begin{equation}\label{E:v_s,u}
\bv_{\fs, \bu} :=
\begin{array}{rcll}
\ldelim( {15}{4pt}[] & 0 & \rdelim) {15}{4pt}[] & \rdelim\}{4}{10pt}[$d_{1}$] \\
& \vdots & & \\
& 0 & & \\
& (-1)^{r-1} u_{1} \cdots u_{r} & & \\
& 0 & & \rdelim\}{4}{10pt}[$d_{2}$] \\
& \vdots & & \\
& 0 & & \\
& (-1)^{r-2} u_{2} \cdots u_{r} & & \\
& \vdots & & \vdots \\
& 0 & & \rdelim\}{4}{10pt}[$d_{r}$] \\
& \vdots & & \\
& 0 & & \\
& u_{r} & & \\[10pt]
\end{array} \in G_{\fs,\bu}(\ok).
\end{equation}
If we denote $|M|_v:=\max\{|M_{ij}|_v\}$ for each matrix $M=(M_{ij})$ with entries in $\mathbb{C}_v$, then $|\bv_{\fs,\bu}|_v<1$ when $|u_m|_v\leq 1$ for each $1\leq m<r$ and $|u_r|_v<1$. In this case, $\log_{G_{\fs,\bu}}(\bv_{\fs,\bu})$ converges $v$-adically. Furthermore, we have the following theorem.
\begin{theorem}(\cite[Thm.~3.3.3]{CM19a})\label{logarithm_interpretation}
Let $u_{1}, \dots, u_{r} \in \ok^{\times}$ with $|u_{m}|_{v} \leq 1$ for each $1 \leq m < r$ and $|u_{r}|_{v} < 1$.
Let $G_{\fs,\bu}$ and $\bv_{\fs,\bu}$ be as above.
Then we have
\[
\log_{G_{\fs,\bu}} (\bv_{\fs,\bu}) =
\begin{array}{rcll}
\ldelim( {15}{4pt}[] & * & \rdelim) {15}{4pt}[] & \rdelim\}{4}{10pt}[$d_{1}$] \\
& \vdots & & \\
& * & & \\
& (-1)^{r-1}\Li_{(s_{r}, \dots, s_{1})}^{\star}(u_{r}, \dots, u_{1})_{v} & & \\
& * & & \rdelim\}{4}{10pt}[$d_{2}$] \\
& \vdots & & \\
& * & & \\
& (-1)^{r-2}\Li_{(s_{r}, \dots, s_{2})}^{\star}(u_{r}, \dots, u_{2})_{v} & & \\
& \vdots & & \vdots \\
& * & & \rdelim\}{4}{10pt}[$d_{r}$] \\
& \vdots & & \\
& * & & \\
& \Li^{\star}_{s_{r}}(u_{r})_{v} & & \\[10pt]
\end{array} \ \ \ \in \Lie G_{\fs,\bu}(\CC_{v}).
\]
\end{theorem}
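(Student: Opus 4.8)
The plan is to compute $\log_{G_{\fs,\bu}}$ explicitly from its defining functional equation and then evaluate it at the point $\bv_{\fs,\bu}$, matching the resulting series against the CMSPLs coordinate by coordinate. Write $\log_{G_{\fs,\bu}} = \sum_{i\geq 0} P_i \tau^i$ with $P_0 = I_d$ and $P_i \in \Mat_d(\ok)$. Since $\partial\rho_t = \theta I_d + N$ and $\rho_t = \theta I_d + N + E\tau$, I would substitute into $\log_G \circ \rho_t = \partial\rho_t \circ \log_G$, move all factors of $\tau$ to the right using $\tau M = M^{(1)}\tau$ (where $M^{(j)}$ denotes the entrywise $q^j$-power twist), and collect the coefficient of $\tau^i$. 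This yields, for $i \geq 1$,
\[
[i]\,P_i - \ad_N(P_i) = -\,P_{i-1}\,E^{(i-1)}, \qquad \ad_N(X) := NX - XN,
\]
with $[i] = \theta^{q^i}-\theta$. Because $N$ is nilpotent, $\ad_N$ is a nilpotent operator, so $[i]\cdot\mathrm{Id} - \ad_N$ is invertible and
\[
P_i = -\sum_{j\geq 0}\frac{\ad_N^{\,j}\bigl(P_{i-1}E^{(i-1)}\bigr)}{[i]^{\,j+1}}.
\]

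The next step is to solve this recursion by exploiting the block structure. The matrix $N$ is block-diagonal with Jordan blocks $N_\ell$ of size $d_\ell$, while $E$ is block-upper-triangular, each $E[\ell m]$ being supported in a single bottom-left corner and carrying the coefficient $(-1)^{m-\ell}\prod_{e=\ell}^{m-1}u_e$ (or $1$ when $\ell=m$). Inside a fixed diagonal block the nilpotent $N_\ell$ together with the factor $1/[i]$ produces, through $([i]\cdot\mathrm{Id}-\ad_{N_\ell})^{-1}$, denominators $[i]^{s_\ell}$; iterating in $i$ and using $[i] = -L_i/L_{i-1}$ makes these telescope into the powers $L_{i}^{s_\ell}$ that appear in the CMSPLs. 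The off-diagonal blocks $E[\ell m]$ are what glue consecutive blocks together, inserting the products of the $u_e$ and forcing the summation indices to be weakly decreasing, thereby reproducing the nested range $i_1 \geq \cdots \geq i_r \geq 0$.

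Finally I would evaluate $\log_{G_{\fs,\bu}}(\bv_{\fs,\bu}) = \sum_{i\geq 0} P_i\,\bv_{\fs,\bu}^{(i)}$. Since $\bv_{\fs,\bu}$ is supported only in the bottom slot of each block $d_\ell$, with entry $(-1)^{r-\ell}u_\ell\cdots u_r$, only the corresponding columns of each $P_i$ contribute, and reading off the $(d_1+\cdots+d_\ell)$-th coordinate collapses the accumulated matrix products into exactly the series $(-1)^{r-\ell}\Li^\star_{(s_r,\dots,s_\ell)}(u_r,\dots,u_\ell)_v$. Convergence of every series in the stated range $|u_m|_v \leq 1$ for $m < r$ and $|u_r|_v < 1$ is ensured by $|\bv_{\fs,\bu}|_v < 1$, which places the point inside the $v$-adic domain of convergence of $\log_{G_{\fs,\bu}}$, as already noted before the statement.

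The main obstacle is the middle step: keeping track of how the nilpotent corrections $\ad_N^{\,j}$ within each block and the off-diagonal couplings $E[\ell m]$ interact across all iterations of the recursion, so that the total contribution to the relevant entry is precisely the nested sum with the correct $L$-denominators, signs, and products of the $u_e$. I would organize this by induction on the depth $r$, viewing $G_{\fs,\bu}$ as an extension of the depth-$(r-1)$ module attached to $(s_1,\dots,s_{r-1})$ by a single Carlitz tensor power, so that pushing $\log_G$ through the fibration reduces each inductive step to one new block; the base case is the depth-one identity $\Li^\star_{s}(u)_v = \sum_{i\geq 0} u^{q^i}/L_i^{s}$, which follows directly from the recursion restricted to a single Jordan block.
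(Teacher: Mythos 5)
This theorem is imported verbatim from \cite[Thm.~3.3.3]{CM19a} and the paper supplies no proof of it, but your outline --- extracting the recursion $[i]P_i-\bigl(NP_i-P_iN\bigr)=-P_{i-1}E^{(i-1)}$ from $\log_G\circ\rho_t=\partial\rho_t\circ\log_G$, inverting $[i]\cdot\mathrm{Id}-\ad(N)$ by nilpotency, and then pairing the resulting rows of $P_i$ against the twists $\bv_{\fs,\bu}^{(i)}$ --- is exactly the computation the paper itself performs in its Proposition~\ref{coefficient matrix}, so you are on the same route as the source. The only device you leave implicit, which is what makes the ``middle step'' bookkeeping collapse in the paper's version, is to left-multiply the recursion by the row vectors $\mathbb{1}_{d_1+\cdots+d_\ell}$ killed by $N$, so that $\mathbb{1}\,\ad(N)^j(X)=(-1)^j\,\mathbb{1}XN^j$ and the recursion closes on the bottom entry of each block alone; your proposed induction on depth via the iterated-extension structure is a workable substitute for this.
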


In \cite{CM19a}, the logarithmic interpretation was used to analytically continue CMSPLs $v$-adically. In what follows, we will recall their result and give the formulation of $v$-adic CMSPLs via the logarithm of the $t$-module $G_{\fs,\bu}$.

\begin{proposition}(\cite[Prop.~4.1.1]{CM19a})\label{CM19_4.1.1}
    We set $\mathcal{O}_{\mathbb{C}_v}$ is the valuation ring inside $\mathbb{C}_v$, $\fm_v$ is the maximal ideal of $\mathcal{O}_{\mathbb{C}_v}$ and $v(t)$ denote by $v(\theta)\mid_{\theta=t}\in\mathbb{F}_q[t]$. Let $u_1,\cdots,u_r\in\ok^{\times}$ with $|u_i|_v\leq 1$ for each $1\leq i\leq r$. Let $G_{\fs,\bu}$ be the $t$-module defined in (\ref{E:Explicit t-moduleCMPL}) and $\bv_{\fs,\bu}\in G_{\fs,\bu}(\ok)$ be defined in (\ref{E:v_s,u}). Let $\ell\geq 1$ be an integer such that each image of $u_i$ in $\mathcal{O}_{\mathbb{C}_v}/\fm_v\cong \overline{\mathbb{F}_q}$ is contained in $\mathbb{F}_{q^\ell}$. We further set
    \begin{equation}\label{a(t), for analytic continuation}
        a(t):=(v(t)^{d_1\ell}-1)(v(t)^{d_2\ell}-1)\cdots(v(t)^{d_r\ell}-1).
    \end{equation}
    Then $$\rho_a(\bv_{\fs,\bu})\in G_{\fs,\bu}(\fm_v).$$ In particular, $\log_{G_{\fs,\bu}}(\rho_a(\bv_{\fs,\bu}))$ converges in $\Lie G_{\fs,\bu}(\mathbb{C}_v)$.
\end{proposition}

From the Proposition \ref{CM19_4.1.1}, we can make the following formulation.

\begin{definition}
Let $$\fs=(s_1,\dots,s_r)\in\mathbb{N}^r,~\bu=(u_1,\dots,u_r)\in\ok^\times\mbox{ with }|u_i|_v\leq 1.$$ We define $$\Tilde{\fs}=(s_r,\dots,s_1),~\Tilde{\bu}=(u_r,\dots,u_1),~\Tilde{d}_\ell:=s_{r+1-\ell}+\cdots+s_1,~1\leq\ell\leq r.$$ Consider the $t$-module $G_{\Tilde{\fs},\Tilde{\bu}}$ defined in (\ref{E:Explicit t-moduleCMPL}), the special point $\bv_{\Tilde{\fs},\Tilde{\bu}}$ defined in (\ref{E:v_s,u}), and any polynomial $a(t)\in\mathbb{F}_q[t]$ so that $$a(t)\neq 0\mbox{ and } \rho_a(\bv_{\Tilde{\fs},\Tilde{\bu}})\in G_{\Tilde{\fs},\Tilde{\bu}}(\fm_v).$$ Then we define $\Li^\star_\fs(u_1,\dots,u_r)_v$ to be the value
\begin{equation}\label{definition: CMSPLs}
    \frac{(-1)^{r-1}}{a(\theta)}\times\mbox{the}~\Tilde{d}_1\mbox{th coordinate of}~\log_{G_{\Tilde{\fs},\Tilde{\bu}}}(\rho_a(\bv_{\Tilde{\fs},\Tilde{\bu}})).
\end{equation}
\end{definition}

\begin{remark}
    The proposition above guarantees the existence of such $a(t)$. Moreover, the definition above is independent of the choice of $a(t)$ (see \cite[Remark.~4.1.3]{CM19a}).
\end{remark}

\begin{remark}
    If $\bu=(u_1,\dots,u_r)\in\Bar{k}^\times$ with $|u_1|_v<1$ and $|u_i|_v\leq 1$ for $2\leq i\leq r$, then the definition above coincide with the original power series expansion, namely $$\Li^\star_\fs(u_1,\dots,u_r)_v=\underset{i_1\geq\cdots\geq i_r\geq 0}{\sum}\frac{u_1^{q^{i_1}}\dots u_r^{q^{i_r}}}{L_{i_1}^{s_1}\cdots L_{i_r}^{s_r}}\in\mathbb{C}_v.$$
\end{remark}

\subsection{The $k$-vector space spanned by $v$-adic CMSPLs}
The aim of this subsection is to study the properties of the $k$-vector space spanned by $\Li_\fs^\star(\bu)_v$ with the same weight. More precisely, we get a lower bound of the $v$-adic valuation of $\Li_\fs^\star(\bu)_v$ in terms of $\wt(\fs)$ and $\epsilon_v$. In addition, we prove a functional equation of $v$-adic CMSPLs arising from the logarithm of $t$-module. As an application, we will prove that the $k$-vector space spanned by $\Li_\fs^\star(\bu)_v$ forms an algebra.

\begin{definition}
    Let $w\in\mathbb{Z}_{\geq 0}$, $S_{0,v}:=\{1\}$ and $$S_{w,v}:=\{\Li_\fs^\star(\bu)_v\mid r\in\mathbb{N},\fs\in\mathbb{N}^r,\wt(\fs)=w,\bu\in A^r\}\mbox{ for }w>0.$$
    We define $\mathcal{L}_{w,v}$ to be the $k$-vector space generated by $S_{w,v}$.
\end{definition}

    To study the properties of $\mathcal{L}_{w,v}$, we need some further information of $\Li_\fs^\star(\bu)_v$. For the convenience of our later use, we adopt the following setting: For any $x\in\mathbb{C}_v$, we define $$x^{(i)}:=\tau^i(x)=x^{q^i}.$$
    For any $M=(M_{ij})\in\Mat_{m\times n}(\mathbb{C}_v)$, we define $M^{(i)}=(M_{ij}^{(i)})$. Furthermore, given any two square matrices $M_1,M_2$ with the same size, we define $$\ad(M_1)^0(M_2):=M_2$$ and for non-negative integers $j$, $$\ad(M_1)^{j+1}(M_2):=M_1(\ad(M_1)^j(M_2))-(\ad(M_1)^j(M_2))M_1.$$
    \begin{proposition}\label{coefficient matrix}
        Let $$\fs:=(s_1,\dots,s_r)\in\mathbb{N}^r,~\bu:=(u_1,\dots,u_r)\in(\Bar{k}^\times)^r,$$ and let $G_{\fs,\bu}$ be the $t$-module defined in (\ref{E:Explicit t-moduleCMPL}).
        Consider $$\log_{G_{\fs,\bu}}:=\underset{i\geq 0}{\sum}P_i\tau^i,~P_0=I_d,$$ and put $$d_1\mbox{-th row of }P_i:=(Y_1^{<i>},\cdots,Y_r^{<i>})\mbox{, where }Y_m^{<i>}\in\Bar{k}^{d_m}\mbox{ for }1\leq m\leq r.$$ If we set $$Y_m^{<i>}=(y_{m,1}^{<i>},\cdots,y_{m,d_m}^{<i>}),$$ then for $1\leq j\leq d_1$ we have 
        \begin{equation}\label{y^(i)_1,j}
            y_{1,j}^{<i>}=\frac{(-[i])^{d_1-j}}{L_i^{d_1}}
        \end{equation}
        and for $m\geq 2$, $1\leq j\leq d_m$ we have 
        \begin{equation}\label{y^(i)_m,j}
            y_{m,j}^{<i>}=(-1)^{m-1}(-[i])^{d_m-j}\underset{0\leq i_1\leq\cdots\leq i_{m-1}<i}{\sum}\frac{u_1^{(i_1)}\cdots u_{m-1}^{(i_{m-1})}}{L_{i_1}^{s_1}\cdots L_{i_{m-1}}^{s_{m-1}}L_i^{d_m}}.
        \end{equation}
    \end{proposition}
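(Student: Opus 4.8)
The plan is to prove both formulas simultaneously by induction on $i$, after extracting from the defining functional equation of $\log_{G_{\fs,\bu}}$ a recursion for its $d_1$-th row alone. The logarithm satisfies $\log_{G_{\fs,\bu}}\circ\rho_t = \partial\rho_t\circ\log_{G_{\fs,\bu}}$, and by (\ref{E:Explicit t-moduleCMPL}) we have $\partial\rho_t = \theta I_d + N$ while $\rho_t = \theta I_d + N + E\tau$. Since $N$ has entries in $\FF_q$ we have $\tau^i N = N\tau^i$ and $\tau^i\theta = \theta^{q^i}\tau^i$; substituting $\log_{G_{\fs,\bu}} = \sum_{i\geq 0}P_i\tau^i$ and comparing coefficients of $\tau^i$ yields, with $P_{-1}:=0$ and $[i]:=\theta^{q^i}-\theta$,
\begin{equation*}
\ad(N)(P_i) - [i]P_i = P_{i-1}E^{(i-1)}.
\end{equation*}

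Next I would pass to the $d_1$-th row $R_i := \mathbb{1}_{d_1}P_i$ by left-multiplying this identity by $\mathbb{1}_{d_1}$. The $d_1$-th row of $N$ is the bottom row of the Jordan block $N_1$ and hence vanishes, so $\mathbb{1}_{d_1}N = 0$ and the term $\mathbb{1}_{d_1}NP_i$ disappears. This leaves $R_i(N + [i]I_d) = -R_{i-1}E^{(i-1)}$; since $N$ is nilpotent and $[i]\neq 0$ for $i\geq 1$, the matrix $N+[i]I_d$ is invertible and
\begin{equation*}
R_i = -R_{i-1}E^{(i-1)}(N+[i]I_d)^{-1}\qquad(i\geq 1),\qquad R_0 = \mathbb{1}_{d_1}.
\end{equation*}
The inverse is block-diagonal, and on the $m$-th Jordan block its $(1,b)$-entry equals $(-1)^{b-1}/[i]^{b}$.

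Then I would run the induction. The base case $i=0$ is immediate from $R_0 = \mathbb{1}_{d_1}$ together with $[0]=0$ and $L_0=1$. For the inductive step I would argue block by block: writing $R_i = (Y_1^{<i>},\dots,Y_r^{<i>})$, the block-upper-triangular shape of $E$ gives $Y_m^{<i>} = -\big(\sum_{\ell\leq m}Y_\ell^{<i-1>}E[\ell m]^{(i-1)}\big)(N_m+[i]I)^{-1}$. Because $E[\ell m]$ has the single nonzero entry $(-1)^{m-\ell}\prod_{e=\ell}^{m-1}u_e$ in its lower-left corner, each $Y_\ell^{<i-1>}E[\ell m]^{(i-1)}$ is supported in its first coordinate, with value $y_{\ell,d_\ell}^{<i-1>}(-1)^{m-\ell}\prod_{e=\ell}^{m-1}u_e^{(i-1)}$; multiplying by the explicit inverse then distributes this scalar over the $m$-th block through the factors $(-1)^{b-1}/[i]^{b}$. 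Substituting the induction hypothesis for the entries $y_{\ell,d_\ell}^{<i-1>}$ and repeatedly using $L_i = -[i]L_{i-1}$ should reproduce the claimed closed forms (\ref{y^(i)_1,j}) and (\ref{y^(i)_m,j}).

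The main obstacle will be the combinatorial and sign bookkeeping in this final step. I expect the summand indexed by $\ell$ to account exactly for those terms of the target sum $\sum_{0\leq i_1\leq\cdots\leq i_{m-1}<i}$ in which precisely the $m-\ell$ trailing indices equal $i-1$ (the case $\ell=m$ carrying $i_{m-1}<i-1$), so that the full nested sum is partitioned without overlap; confirming this decomposition, and simultaneously reconciling the several sign sources $(-1)^{m-\ell}$, $(-1)^{b-1}$, $(-1)^{m-1}$ and the powers of $-[i]$, is where the genuine care is required. The remaining manipulations are formal.
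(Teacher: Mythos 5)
Your argument is correct and follows essentially the same route as the paper: the functional equation $\log_{G_{\fs,\bu}}\circ\rho_t=\partial\rho_t\circ\log_{G_{\fs,\bu}}$ yields a recursion for the $d_1$-th row via $\mathbb{1}_{d_1}N=0$, and your identity $R_i=-R_{i-1}E^{(i-1)}(N+[i]I_d)^{-1}$, expanded through the geometric series for $(N+[i]I_d)^{-1}$, is exactly the paper's formula $\mathbb{1}_{d_1}P_{i+1}=\sum_{j}\mathbb{1}_{d_1}P_iE^{(i)}N^j/(-[i+1])^{j+1}$, obtained there by iterating the $\ad(N)$ relation on all of $P_{i+1}$ before projecting. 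The only substantive difference is that you close the induction yourself via the (correct) partition of $\{0\leq i_1\leq\cdots\leq i_{m-1}<i\}$ by the number of trailing indices equal to $i-1$, whereas the paper stops at $y_{m,j}^{<i>}=(-[i])^{d_m-j}y_{m,d_m}^{<i>}$ and quotes \cite[Prop.~3.2.1]{CM19a} for the last entries; this makes your version self-contained but does not change the method.
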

    
    \begin{remark}
        This proposition is an improvement of \cite[Prop.~3.2.1]{CM19a}, using the strategy of \cite{AT90}. More precisely, Chang and Mishiba proved in \cite[Prop.~3.2.1]{CM19a} that $$y_{1,d_1}^{<i>}=\frac{1}{L_i^{d_1}}$$ and for $m\geq 2$ $$y_{m,d_m}^{<i>}=(-1)^{m-1}\underset{0\leq i_1\leq\cdots\leq i_{m-1}<i}{\sum}\frac{u_1^{(i_1)}\cdots u_{m-1}^{(i_{m-1})}}{L_{i_1}^{s_1}\cdots L_{i_{m-1}}^{s_{m-1}}L_i^{d_m}}.$$
    \end{remark}
    
    \begin{remark}
        For depth one case, ie., $\fs=s$, $\bu=u$, our $t$-module $G_{\fs,\bu}$ is the $s$-th tensor power of Carlitz module and our result coincide with \cite[Cor.~4.1.5]{Pp}.
    \end{remark}
    
    \begin{proof}
        Consider the functional equation $$\log_{G_{\fs,\bu}}\circ~\rho_t=\partial\rho_t\circ\log_{G_{\fs,\bu}}.$$ We have $$(\underset{i\geq 0}{\sum}P_i\tau^i)\cdot(\theta I_d+N+E\tau)=(\theta I_d+N)\cdot(\underset{i\geq 0}{\sum}P_i\tau^i).$$ By comparison with the coefficient matrix of $\tau^{i+1}$, we obtain $$P_{i+1}(\theta^{q^{i+1}}\cdot I_d+N)+P_iE^{(i)}=(\theta\cdot I_d+N)\cdot P_{i+1}$$ 
        Therefore, $$[i+1]P_{i+1}+P_{i+1}N-NP_{i+1}=-P_iE^{(i)}$$ and thus, $$ P_{i+1}-\frac{\ad(N)^1(P_{i+1})}{[i+1]}=-\frac{P_iE^{(i)}}{[i+1]}.$$ 
        Using this recurrence relation iteratively, we finally get $$P_{i+1}=
        -\underset{j=0}{\overset{2d_1-2}{\sum}}\frac{\ad(N)^j(P_iE^{(i)})}{[i+1]^{j+1}}.$$ This equality also use the fact that $\ad(N)^{2d_1-1}(P_{i+1})=0$ since $N^{d_1}=0$. Now, we consider the vector $\mathbb{1}_{d_1}=(\delta_{d_1,\ell})_{1\leq\ell\leq d}$. Note that $\mathbb{1}_{d_1}N=0$ and $\ad(N)^j(P_iE^{(i)})$ can be expressed as $$\ad(N)^j(P_iE^{(i)})=NB+(-1)^jP_iE^{(i)}N^j$$ for some $B\in\Mat_d(\Bar{k})$. Hence
        
        \begin{equation}\label{recurrence relation}
            \mathbb{1}_{d_1}P_{i+1}=-\underset{j=0}{\overset{2d_1-2}{\sum}}\frac{\mathbb{1}_{d_1}\ad(N)^j(P_iE^{(i)})}{[i+1]^{j+1}}=\underset{j=0}{\overset{2d_1-2}{\sum}}\frac{\mathbb{1}_{d_1}P_iE^{(i)}N^j}{(-[i+1])^{j+1}}.
        \end{equation}
        For each $i,j$, we have
        \begin{align*}
            E^{(i)}N^j&=
        \left(
            \begin{array}{cccc}
                E[11]^{(i)} & E[12]^{(i)} & \cdots & E[1r]^{(i)} \\
                & E[22]^{(i)} & \ddots & \vdots \\
                & & \ddots & E[r-1,r]^{(i)} \\
                & & & E[rr]^{(i)}
            \end{array}
        \right)
        \left(
            \begin{array}{cccc}
                N_{1}^j & & & \\
                & N_{2}^j & & \\
                & & \ddots & \\
                & & & N_{r}^j
            \end{array}
        \right)\\
        &=\left(
            \begin{array}{cccc}
                E[11]^{(i)}N_1^j & E[12]^{(i)}N_2^j & \cdots & E[1r]^{(i)}N_r^j \\
                & E[22]^{(i)}N_2^j & \ddots & \vdots \\
                & & \ddots & E[r-1,r]^{(i)}N_r^j \\
                & & & E[rr]^{(i)}N_r^j
            \end{array}
        \right).
        \end{align*}
        For $1\leq\ell\leq m\leq r$, $0\leq j\leq d_m-1$, we have 
        $$E[\ell m]^{(i)}N_m^j=
         \left(
            \begin{array}{ccccccc}
                0 & \cdots & \cdots & 0 & \cdots & \cdots & 0 \\
                \vdots &  &  & \vdots &  &  & \vdots \\
                \vdots &  &  & 0 &  &  & \vdots \\
                0 & \cdots & 0 & (-1)^{m-\ell} \prod_{e=\ell}^{m-1} u_{e}^{(i)} & 0 & \cdots & 0
            \end{array}
        \right)$$ where the only non-zero element appears in the $(d_m,j+1)$-th entry. If we denote by $$Y_m^{<i>}=(y_{m,1}^{<i>},\cdots,y_{m,d_m}^{<i>}),$$ then by comparing with both sides of (\ref{recurrence relation}), we obtain 
        $$Y_1^{<i+1>}=\left(\frac{y_{1,d_1}^{<i>}}{-[i+1]},\frac{y_{1,d_1}^{<i>}}{(-[i+1])^2},\cdots,\frac{y_{1,d_1}^{<i>}}{(-[i+1])^{d_1}}\right)$$ and for $m\geq 2$ 
        $$Y_m^{<i+1>}=\left(\frac{y_{m,d_m}^{<i>}+\underset{n=1}{\overset{m-1}{\sum}}y_{n,d_n}^{<i>}(-1)^{m-n}\underset{e=n}{\overset{m-1}{\prod}}u_e^{(i)}}{-[i+1]},\cdots,\frac{y_{m,d_m}^{<i>}+\underset{n=1}{\overset{m-1}{\sum}}y_{n,d_n}^{<i>}(-1)^{m-n}\underset{e=n}{\overset{m-1}{\prod}}u_e^{(i)}}{(-[i+1])^{d_m}}\right).$$ Consequently, we get that $$y_{m,j}^{<i>}=(-[i])^{d_m-j}y_{m,d_m}^{<i>}.$$
        The desired result now follows immediately from the relation $L_{i+1}=-[i+1]L_i$ and \cite[Prop.~3.2.1]{CM19a}.
    \end{proof}
    
    In order to prove the functional equation of $v$-adic CMSPLs, we adopt the following notation. Let $\fs=(s_1,\dots,s_r)\in\mathbb{N}^r$. Then we define $$\fs_{(i)}=(s_1+\cdots+s_i, s_{i+1},\dots, s_r)\mbox{ for }1\leq i\leq r\mbox{
    and }\fs_{(i)}=\varnothing\mbox{ for }i>r\mbox{ or }i<0.$$ For convenience, we define $\Li^\star_{\varnothing}=0$. Finally, we consider the Carlitz difference operator $\vartriangle_1$, which acts on $f\in k\llbracket z_1\dots,z_r\rrbracket$ by $$(\vartriangle_1f)(z_1\cdots,z_r):=f(\theta z_1,z_2,\dots,z_r)-\theta f(z_1,\dots,z_r).$$
    
    \begin{lemma}\label{Carlitz difference operator}
        Let $\vartriangle_1^j:=\vartriangle_1\circ\cdots\circ\vartriangle_1$ be the $j$-fold composition of the Carlitz difference operator. Then we have $$(\vartriangle_1^j f)(z_1,\dots,z_r)=\underset{\ell=0}{\overset{j}{\sum}}(-1)^\ell\binom{j}{\ell}\theta^\ell f(\theta^{j-\ell}z_1,z_2,\dots,z_r).$$
    \end{lemma}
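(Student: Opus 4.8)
The plan is to recognize $\vartriangle_1$ as the difference of two commuting operators on $k\llbracket z_1,\dots,z_r\rrbracket$ and then expand a power of it by the binomial theorem. First I would introduce the shift operator $T$ defined by $(Tf)(z_1,\dots,z_r):=f(\theta z_1,z_2,\dots,z_r)$ together with multiplication by the scalar $\theta$, written $\theta\cdot\mathrm{id}$. With this notation the Carlitz difference operator reads $\vartriangle_1=T-\theta\cdot\mathrm{id}$. Since $T$ acts only on the first variable by rescaling while $\theta\cdot\mathrm{id}$ is a scalar, these two $\mathbb{F}_q$-linear operators commute, so I may apply the ordinary binomial theorem to the $j$-th power:
$$\vartriangle_1^j=(T-\theta\cdot\mathrm{id})^j=\sum_{\ell=0}^{j}(-1)^\ell\binom{j}{\ell}\theta^\ell\,T^{\,j-\ell}.$$
Because $T^{\,j-\ell}$ iterates the rescaling $z_1\mapsto\theta z_1$ exactly $j-\ell$ times, one has $(T^{\,j-\ell}f)(z_1,\dots,z_r)=f(\theta^{\,j-\ell}z_1,z_2,\dots,z_r)$, and substituting this into the displayed identity yields the claimed formula.

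Alternatively, to keep the argument self-contained if one prefers not to invoke the operator formalism, I would run a direct induction on $j$. The base case $j=0$ (equivalently $j=1$) is immediate from the definition of $\vartriangle_1$. For the inductive step I would apply $\vartriangle_1$ to the hypothesized expansion of $\vartriangle_1^j f$, using term by term that
$$\vartriangle_1\big(f(\theta^{a}z_1,z_2,\dots,z_r)\big)=f(\theta^{a+1}z_1,z_2,\dots,z_r)-\theta\,f(\theta^{a}z_1,z_2,\dots,z_r).$$
Reindexing the second resulting sum by $\ell\mapsto\ell-1$ and collecting coefficients via Pascal's rule $\binom{j}{\ell}+\binom{j}{\ell-1}=\binom{j+1}{\ell}$ then produces exactly the expansion with $j$ replaced by $j+1$.

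Either route involves no genuine obstacle; the only point requiring care is the bookkeeping of the shift of summation index together with the sign $(-1)^\ell$ and the power $\theta^\ell$ in the inductive step, but this is precisely what Pascal's rule is designed to handle. The essential structural input is simply that the two operators whose difference defines $\vartriangle_1$ commute, which makes the statement the exact $\mathbb{F}_q[\theta]$-analogue of the classical finite-difference expansion of $(\text{shift}-\mathrm{id})^j$, and I would present the operator-theoretic derivation as the main proof.
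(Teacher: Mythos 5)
Your main argument is correct and is essentially identical to the paper's own proof: the paper likewise introduces the shift operator ($S_\theta$) and the multiplication operator ($I_\theta$), observes they commute, and expands $\vartriangle_1^j=(S_\theta-I_\theta)^j$ by the binomial theorem. The alternative induction you sketch is also fine but unnecessary; the operator-theoretic route is exactly what the paper does.
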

    
    \begin{proof}
        The lemma is essentially an application of the binomial theorem. We introduce two operators $S_\theta$ and $I_\theta$. Here $S_\theta$ is the operator defined by $$(S_\theta f)(z_1,\dots,z_r):=f(\theta z_1,z_2,\dots,z_r).$$ We let $I_\theta$ be the multiplication operator defined by $$(I_\theta f)(z_1,\dots,z_r):=\theta f(z_1,\dots,z_r).$$ Note that $\vartriangle_1=S_\theta-I_\theta$ and $S_\theta\circ I_\theta=I_\theta\circ S_\theta$. Thus, we have $$\vartriangle_1^j=(S_\theta-I_\theta)^j=\underset{\ell=0}{\overset{j}{\sum}}(-1)^\ell\binom{j}{\ell}S_\theta^{j-\ell}\circ I_\theta^\ell.$$ Consequently, we obtain $$(\vartriangle_1^jf)(z_1,\dots,z_r)=\underset{\ell=0}{\overset{j}{\sum}}(-1)^\ell\binom{j}{\ell}\theta^\ell f(\theta^{j-\ell}z_1,z_2,\dots,z_r).$$
    \end{proof}
    
    A simple application of lemma \ref{Carlitz difference operator} comes from the observation that $$\vartriangle_1z^{q^i}=[i]z^{q^i}.$$ More generally, if we have an $\FF_q$-linear power series $$f(z_1,\dots,z_r)=\underset{i=0}{\overset{\infty}{\sum}}c_iz_1^{q^i}\in k\llbracket z_1\dots,z_r\rrbracket,$$ where $c_i\in k\llbracket z_2,\dots,z_r\rrbracket$. Then we have $$(\vartriangle_1^j f)(z_1,\dots,z_r)=\underset{i=0}{\overset{\infty}{\sum}}c_i[i]^jz_1^{q^i}.$$ Surprisingly, this phenomenon also occurs in the calculation of the $v$-adic CMSPLs. We formulate the statement as follows.
    
    \begin{theorem}\label{functional equation}
        Let $$\fs=(s_1,\dots,s_r)\in\mathbb{N}^r,~\bu=(u_1,\dots,u_r)\in(\ok^\times)^r\mbox{ with }|u_i|_v\leq 1.$$ We define $$\Tilde{\fs}=(s_r,\dots,s_1),~\Tilde{\bu}=(u_r,\dots,u_1),~\Tilde{d}_\ell:=s_{r+1-\ell}+\cdots+s_1,~1\leq\ell\leq r.$$ Consider the $t$-module $G_{\Tilde{\fs},\Tilde{\bu}}$ defined in (\ref{E:Explicit t-moduleCMPL}), the special point $\bv_{\Tilde{\fs},\Tilde{\bu}}$ defined in (\ref{E:v_s,u}), and any polynomial $a(t)\in\mathbb{F}_q[t]$ so that $$\rho_a(\bv_{\Tilde{\fs},\Tilde{\bu}})\in G_{\Tilde{\fs},\Tilde{\bu}}(\fm_v).$$ We further set $$\rho_a(\bv_{\Tilde{\fs},\Tilde{\bu}})=(V_1,\dots,V_r)^{\tr},\mbox{ where }V_m\in\Bar{k}^{\Tilde{d}_m}.$$ If we set $$V_m:=(V_{m,1},\dots,V_{m,\Tilde{d}_m})^{\tr},$$ then we have the following identity $$\Li_\fs^\star(\bu)_v=\frac{(-1)^{r-1}}{a(\theta)}\underset{m=1}{\overset{r}{\sum}}\underset{j=0}{\overset{\Tilde{d}_m-1}{\sum}}\underset{\ell=0}{\overset{j}{\sum}}(-1)^{j+\ell+m-1}\binom{j}{\ell}\theta^\ell\{\Li_{\fs_{({r+1-m})}}^\star(\theta^{j-\ell}V_{m,\Tilde{d}_m-j},u_{r+2-m},\dots,u_r)_v$$ $$-\Li_{\fs_{(r+2-m)}}^\star(\theta^{j-\ell}V_{m,\Tilde{d}_m-j}u_{r+2-m},u_{r+3-m},\dots,u_r)_v\}.$$
    \end{theorem}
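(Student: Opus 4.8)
The plan is to open up the left-hand side using its very definition and the explicit description of $\log_{G_{\tilde{\fs},\tilde{\bu}}}$ provided by Proposition~\ref{coefficient matrix}, then to convert the powers of $[i]$ that appear into iterated Carlitz difference operators by means of Lemma~\ref{Carlitz difference operator}. By the definition~(\ref{definition: CMSPLs}), $\Li^\star_\fs(\bu)_v$ equals $\frac{(-1)^{r-1}}{a(\theta)}$ times the $\tilde{d}_1$-th entry of $\sum_{i\ge 0}P_i\,\rho_a(\bv_{\tilde{\fs},\tilde{\bu}})^{(i)}$, where the $P_i$ are the coefficients of $\log_{G_{\tilde{\fs},\tilde{\bu}}}$. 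Reading the $\tilde{d}_1$-th row of $P_i$ off Proposition~\ref{coefficient matrix} (applied to the reversed data $\tilde{\fs},\tilde{\bu}$, whose first block has size $\tilde{d}_1$), its entry over the $p$-th slot of the $m$-th coordinate block is $(-1)^{m-1}(-[i])^{\tilde{d}_m-p}C_m^{<i>}$ for a scalar $C_m^{<i>}$ independent of $p$ (with $C_1^{<i>}=1/L_i^{\tilde{d}_1}$). Reindexing $p=\tilde{d}_m-j$, the $m$-th block contributes $\sum_{j=0}^{\tilde{d}_m-1}(-1)^{m-1}(-1)^{j}\sum_{i\ge0}[i]^{j}C_m^{<i>}V_{m,\tilde{d}_m-j}^{(i)}$ to the $\tilde{d}_1$-th entry; since $\rho_a(\bv_{\tilde{\fs},\tilde{\bu}})\in G_{\tilde{\fs},\tilde{\bu}}(\fm_v)$ the defining series converges, so this rearrangement into blocks is legitimate in the non-archimedean setting.

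The combinatorial heart of the proof is an identity for the $\FF_q$-linear power series $F_m(z):=\sum_{i\ge0}C_m^{<i>}z^{q^i}$, namely
\[
F_m(z)=\Li^\star_{\fs_{(r+1-m)}}(z,u_{r+2-m},\dots,u_r)-\Li^\star_{\fs_{(r+2-m)}}(zu_{r+2-m},u_{r+3-m},\dots,u_r),
\]
valid as formal power series in $z$ with coefficients in $\ok$. To prove it I would note that $\fs_{(r+1-m)}$ is the depth-$m$ index with leading entry $\tilde{d}_m$, so the first star polylogarithm is the sum over $j_1\ge j_2\ge\cdots\ge j_m\ge0$ of $z^{q^{j_1}}u_{r+2-m}^{q^{j_2}}\cdots u_r^{q^{j_m}}/(L_{j_1}^{\tilde{d}_m}L_{j_2}^{s_{r+2-m}}\cdots L_{j_m}^{s_r})$. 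Splitting the outer index into the cases $j_1>j_2$ and $j_1=j_2$, the diagonal $j_1=j_2$ merges $z$ with $u_{r+2-m}$ and reproduces exactly $\Li^\star_{\fs_{(r+2-m)}}(zu_{r+2-m},\dots)$ because $\tilde{d}_m+s_{r+2-m}=s_1+\cdots+s_{r+2-m}$ is the leading entry of $\fs_{(r+2-m)}$, while the strict part $j_1>j_2$, after the substitution $i=j_1$ and $i_{m-k}=j_{k+1}$, is precisely $F_m(z)$ by the explicit shape of $C_m^{<i>}$ recorded in Proposition~\ref{coefficient matrix}.

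With this in hand, the observation following Lemma~\ref{Carlitz difference operator} gives $\sum_{i\ge0}[i]^{j}C_m^{<i>}V_{m,\tilde{d}_m-j}^{(i)}=(\vartriangle_1^{j}F_m)(V_{m,\tilde{d}_m-j})$, and Lemma~\ref{Carlitz difference operator} expands the right-hand side as $\sum_{\ell=0}^{j}(-1)^\ell\binom{j}{\ell}\theta^\ell F_m(\theta^{j-\ell}V_{m,\tilde{d}_m-j})$. Summing over the blocks and multiplying by $\frac{(-1)^{r-1}}{a(\theta)}$ reassembles the triple sum of the statement, the total sign coming out as $(-1)^{m-1+j+\ell}$; this last step is bookkeeping once the pieces are in place.

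The step that genuinely needs care — and the subtlety flagged in the introduction — is replacing the power-series value $F_m(\theta^{j-\ell}V_{m,\tilde{d}_m-j})$ by the difference of analytically continued CMSPL values appearing in the statement, since the logarithmic interpretation of Theorem~\ref{logarithm_interpretation} is available only at the special points $\bv_{\fs,\bu}$ and not after applying $\rho_a$. The resolution is that the functional equation is arranged so that every component CMSPL on the right is evaluated at a point whose first argument is small: because $\rho_a(\bv_{\tilde{\fs},\tilde{\bu}})\in G_{\tilde{\fs},\tilde{\bu}}(\fm_v)$ one has $|V_{m,\tilde{d}_m-j}|_v<1$, and as $|\theta|_v\le1$ the point $\theta^{j-\ell}V_{m,\tilde{d}_m-j}$ lies in the open unit disk. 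Hence, by the remark that the definition of $\Li^\star$ coincides with its naive power series whenever the first variable has absolute value $<1$ and the remaining variables absolute value $\le1$, each analytically continued value on the right agrees with the corresponding convergent power series, so the identity for $F_m$ matches them term by term. I expect this reconciliation of the two definitions of $\Li^\star$, together with the index-reversal bookkeeping relating $\fs,\bu$ to $\tilde{\fs},\tilde{\bu}$ and to the contractions $\fs_{(i)}$, to be the principal obstacle, whereas the difference-operator manipulation is essentially forced.
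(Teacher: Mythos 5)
Your proposal is correct and follows essentially the same route as the paper: expand $\Li^\star_\fs(\bu)_v$ via the $\Tilde{d}_1$-th row of the $\Tilde{P}_i$ from Proposition~\ref{coefficient matrix}, split the simplex $\{i_1\leq\cdots\leq i_{m-1}<i\}$ by inclusion--exclusion into $\{\cdots\leq i\}$ minus $\{\cdots=i\}$ (your $j_1>j_2$ versus $j_1=j_2$ split is the same decomposition read the other way), and convert the powers of $[i]$ into $\vartriangle_1^j$ via Lemma~\ref{Carlitz difference operator}. Your packaging of the block coefficients into the generating series $F_m(z)$ and your explicit remark that $|V_{m,\Tilde{d}_m-j}|_v<1$ reconciles the power-series and analytically continued values are only presentational refinements of what the paper does inline.
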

    
    \begin{proof}
        Let $\log_{G_{\Tilde{\fs},\Tilde{\bu}}}=\underset{i\geq 0}{\sum}\Tilde{P}_i\tau^i$ and 
        $$\Tilde{d}_1\mbox{-th row of }\Tilde{P}_i:=(\Tilde{Y}_1^{<i>},\dots,\Tilde{Y}_r^{<i>})\mbox{, where }\Tilde{Y}_m^{<i>}\in\Bar{k}^{\Tilde{d}_m}\mbox{ for }1\leq m\leq r.$$
        Then we have 
        \begin{align*}
            \Li_\fs^\star(\bu)_v&=\frac{(-1)^{r-1}}{a(\theta)}~\underset{i\geq 0}{\sum}(\Tilde{Y}_1^{<i>},\dots,\Tilde{Y}_r^{<i>})\cdot (V_1^{(i)},\dots,V_r^{(i)})^{\tr}\\
            &=\underset{i\geq0}{\sum}\{(\Tilde{Y}_1^{<i>})\cdot(V_1^{(i)})^{\tr}+\cdots+(\Tilde{Y}_r^{<i>})\cdot(V_r^{(i)})^{\tr}\}.
        \end{align*}
        For $m=1$, we have
        \begin{align*}
            \underset{i\geq0}{\sum}(\Tilde{Y}_1^{<i>})\cdot(V_1^{(i)})^{\tr}&=\underset{i\geq0}{\sum}~\underset{j=1}{\overset{\Tilde{d}_1}{\sum}}\frac{(-1)^{\Tilde{d}_1-j}[i]^{\Tilde{d}_1-j}V_{1,j}^{q^i}}{L_i^{\Tilde{d}_1}}=\underset{j=0}{\overset{\Tilde{d}_1-1}{\sum}}~\underset{i\geq0}{\sum}\frac{(-1)^{j}[i]^{j}V_{1,\Tilde{d}_1-j}^{q^i}}{L_i^{\Tilde{d}_1}}\\
            &=\underset{j=0}{\overset{\Tilde{d}_1-1}{\sum}}(-1)^j(\vartriangle_1^j\Li_{\Tilde{d}_1}^\star)(V_{1,\Tilde{d}_1-j})=\underset{j=0}{\overset{\Tilde{d}_1-1}{\sum}}\underset{\ell=0}{\overset{j}{\sum}}(-1)^{j+\ell}\binom{j}{\ell}\theta^\ell\Li_{\Tilde{d}_1}^\star(\theta^{j-\ell}V_{1,\Tilde{d}_1-j})\\
            &=\underset{j=0}{\overset{\Tilde{d}_1-1}{\sum}}\underset{\ell=0}{\overset{j}{\sum}}(-1)^{j+\ell}\binom{j}{\ell}\theta^\ell\{\Li_{\fs_{(r)}}^\star(\theta^{j-\ell}V_{1,\Tilde{d}_1-j})-\Li_{\fs_{(r+1)}}^\star(\theta^{j-\ell}V_{1,\Tilde{d}_1-j})\}.
        \end{align*}
        The first equality comes from (\ref{y^(i)_1,j}) and the fourth equality follows by lemma \ref{Carlitz difference operator}.\\
        For $2\leq m\leq r$, we have 
        \begin{align*}
            \underset{i\geq0}{\sum}(\Tilde{Y}_m^{<i>})\cdot(V_m^{(i)})^{\tr}&=\underset{i\geq0}{\sum}~\underset{j=1}{\overset{\Tilde{d}_m}{\sum}}(-1)^{m-1+\Tilde{d}_m-j}\underset{0\leq i_1\leq\cdots\leq i_{m-1}<i}{\sum}~\frac{[i]^{\Tilde{d}_m-j}u_r^{(i_1)}\cdots u_{r+2-m}^{(i_{m-1})}V_{m,j}^{(i)}}{L_{i_1}^{s_r}\cdots L_{i_{m-1}}^{s_{r+2-m}}L_i^{\Tilde{d}_m}}\\
            &=\underset{i\geq0}{\sum}~\underset{j=1}{\overset{\Tilde{d}_m}{\sum}}(-1)^{m-1+\Tilde{d}_m-j}(\underset{0\leq i_1\leq\cdots\leq i_{m-1}\leq i}{\sum}-\underset{0\leq i_1\leq\cdots\leq i_{m-1}=i}{\sum})~\frac{[i]^{\Tilde{d}_m-j}u_r^{(i_1)}\cdots u_{r+2-m}^{(i_{m-1})}V_{m,j}^{(i)}}{L_{i_1}^{s_r}\cdots L_{i_{m-1}}^{s_{r+2-m}}L_i^{\Tilde{d}_m}}\\
            &=\underset{j=0}{\overset{\Tilde{d}_m-1}{\sum}}(-1)^{m+j-1}\underset{i\geq0}{\sum}(\underset{0\leq i_1\leq\cdots\leq i_{m-1}\leq i}{\sum}-\underset{0\leq i_1\leq\cdots\leq i_{m-1}=i}{\sum})~\frac{[i]^{j}u_r^{(i_1)}\cdots u_{r+2-m}^{(i_{m-1})}V_{m,\Tilde{d}_m-j}^{(i)}}{L_{i_1}^{s_r}\cdots L_{i_{m-1}}^{s_{r+2-m}}L_i^{\Tilde{d}_m}}\\
            &=\underset{j=0}{\overset{\Tilde{d}_m-1}{\sum}}(-1)^{m+j-1}\{(\vartriangle_1^j\Li_{\fs_{(r+1-m)}}^\star)(V_{m,d_m-j},u_{r+2-m},\dots,u_r)\\
            &-(\vartriangle_1^j\Li_{\fs_{(r+2-m)}}^\star)(V_{m,d_m-j}u_{r+2-m},u_{r+3-m},\dots,u_r)\}\\
            &=\underset{j=0}{\overset{\Tilde{d}_m-1}{\sum}}\underset{\ell=0}{\overset{j}{\sum}}(-1)^{j+\ell+m-1}\binom{j}{\ell}\theta^\ell\{\Li_{\fs_{({r+1-m})}}^\star(\theta^{j-\ell}V_{m,\Tilde{d}_m-j},u_{r+2-m},\dots,u_r)_v\\
            &-\Li_{\fs_{(r+2-m)}}^\star(\theta^{j-\ell}V_{m,\Tilde{d}_m-j}u_{r+2-m},u_{r+3-m},\dots,u_r)_v\}.
        \end{align*}
        The first equality comes from (\ref{y^(i)_m,j}) and the fourth equality follows by lemma \ref{Carlitz difference operator}. The desired result now follows by combining the above two cases.
    \end{proof}
    
    \begin{remark}\label{stuffle}
        In order to state the following corollary of Theorem \ref{functional equation}, it is important to point out that $\Li_\fs^\star(\bu)_v\in S_{w,v}^\star$ and $\Li^\star_{\fs'}(\bu')_v\in S_{w',v}^\star$ satisfy stuffle relations analogous to the characteristic zero case. Indeed, every $\Li_\fs^\star(\bu)_v\in S^\star_{w,v}$ has a power series expansion, namely $$\Li_\fs^\star(\bu)_v=\underset{i_1\geq\cdots\geq i_r\geq 0}{\sum}\frac{u_1^{q^{i_1}}\dots u_r^{q^{i_r}}}{L_{i_1}^{s_1}\cdots L_{i_r}^{s_r}}\in k_v.$$
        Then inclusion-exclusion principle on the set $$\{i_1\geq\cdots\geq i_r\geq 0\}$$ shows that CMSPLs can be written as $\mathbb{F}_q$-linear combinations of CMPLs. For example, $$\{i_1\geq i_2\geq 0\}=\{i_1>i_2\geq 0\}\cup \{i_1=i_2\geq 0\}.$$ It follows that $$\Li^\star_{(s_1,s_2)}(u_1,u_2)_v=\Li_{(s_1,s_2)}(u_1,u_2)+\Li_{(s_1+s_2)}(u_1\cdot u_2).$$ Then \cite[Sec.~5.2]{C14} and \cite[Prop.~5.2.3]{CM19a} now provide the stuffle relations for CMSPLs. For example, for $\fs=s_1\in\mathbb{N}$, $\fs'=s_2\in \mathbb{N}$, $\bu=u_1\in vA$, $\bu'=u_2\in vA$, we have $$\Li_{s_1}^\star(u_{1})_v\cdot\Li_{s_2}^\star(u_2)_v=\Li_{(s_1,s_2)}^\star(u_1,u_2)_v+\Li_{(s_2,s_1)}^\star(u_2,u_1)_v-\Li_{(s_1+s_2)}^\star(u_1u_2)_v.$$
    \end{remark}
    
    Now, we are ready to state a corollary for the Theorem \ref{functional equation}.
    
    \begin{corollary}\label{lower bound of CMPSL's}
        Let $w\in\mathbb{Z}_{\geq 0}$, $S_{0,v}^\star:=\{1\}$ and $$S_{w,v}^\star:=\{\Li_\fs^\star(\bu)_v\mid r\in \mathbb{N},\fs\in\mathbb{N}^r,\wt(\fs)=w,\bu\in vA\times A^{r-1}\}\mbox{ for }w>0.$$
        Recall that $\mathcal{L}_{w,v}$ is the $k$-vector space generated by $S_{w,v}$.
        Then we have the following:
        \begin{enumerate}
            \item As a $k$-vector space, $\mathcal{L}_{w,v}$ is generated by $S_{w,v}^\star$. Moreover, let $$\mathcal{L}_v=\sum_{w\in\mathbb{Z}_{\geq 0}}\mathcal{L}_{w,v}.$$ Then $\mathcal{L}_v$ forms a $k$-algebra.
            \item Let $$B_{w,v}:=\min_{n\geq 0}\{q_v^n-n\cdot w\}.$$ Then $$\ord_v(\Li_\fs^\star(\bu)_v)\geq B_{w,v}~\mbox{ for every }\Li_\fs^\star(\bu)_v\in S_{w,v}.$$ In particular, $$\Li_\fs^\star(\bu)_v\in A_v~\mbox{ if }q_v\geq \wt(\fs).$$
        \end{enumerate}
    \end{corollary}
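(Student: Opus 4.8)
The plan is to prove both parts simultaneously by leveraging the functional equation of Theorem~\ref{functional equation} as an inductive engine, with the depth-one case serving as the base. First I would observe that every element of $S_{w,v}$ can be rewritten in terms of elements of $S_{w,v}^\star$: by the inclusion-exclusion argument sketched in Remark~\ref{stuffle}, each CMSPL with arguments in $A^r$ expands as an $\FF_q$-linear combination of CMPLs, and conversely the star-version over the constrained index set $vA\times A^{r-1}$ spans the same $k$-vector space. So for part (1), I would first verify that $S_{w,v}^\star$ generates $\cL_{w,v}$ as a $k$-vector space, and then establish the algebra structure. The key point is that the product $\Li^\star_\fs(\bu)_v\cdot\Li^\star_{\fs'}(\bu')_v$ lands in $\cL_{w+w',v}$; rather than invoking stuffle relations directly (which are not yet established $v$-adically after analytic continuation), I would use the functional equation to reduce any such product to a $k$-linear combination of CMSPLs of weight $w+w'$. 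This is precisely the strategy advertised in the introduction.

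For part (2), the heart of the matter is the valuation estimate $\ord_v(\Li^\star_\fs(\bu)_v)\geq B_{w,v}$. Here I would induct on the depth $r$. The base case $r=1$ reduces to estimating $\ord_v$ of $\Li^\star_s(u)_v=\sum_{i\geq 0} u^{q^i}/L_i^s$; using Proposition~\ref{Basic_Estimation}(1), which gives $\ord_v(L_i)=\alpha$ when $i=\alpha\epsilon_v+\beta$, one finds that the $i$-th term has valuation at least $q^i-s\cdot\ord_v(L_i)\geq q^i - s\alpha$, and minimizing over the relevant range recovers the quantity $B_{s,v}=\min_{n\geq 0}\{q_v^n-n\cdot s\}$ (the substitution $n\leftrightarrow\alpha$ together with $|u|_v\leq 1$ handles the numerator contribution). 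For the inductive step I would feed the functional equation: each term on the right-hand side is a product $\theta^\ell\cdot\Li^\star_{\fs_{(\cdot)}}(\cdots)_v$ whose polylog factor has strictly smaller depth and whose total weight is still $w$, so the inductive hypothesis applies to each summand. The factor $1/a(\theta)$ and the binomial coefficients $\binom{j}{\ell}\theta^\ell$ contribute nonnegative $\ord_v$ (since $a(t)$ can be chosen with unit leading behavior and $\theta$ is a $v$-adic unit when $v\neq\theta$), so they do not degrade the bound; one must check that the shifted arguments $\theta^{j-\ell}V_{m,\tilde d_m-j}$ still lie in $\fm_v$ or at least have $|\cdot|_v\leq 1$, which follows from $\rho_a(\bv)\in G(\fm_v)$.

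The final implication, that $\Li^\star_\fs(\bu)_v\in A_v$ whenever $q_v\geq\wt(\fs)=w$, is a clean corollary of the estimate: I claim $B_{w,v}\geq 0$ exactly when $q_v\geq w$. Indeed $B_{w,v}=\min_{n\geq 0}\{q_v^n-nw\}$, and the $n=0$ term is $1$ while the $n=1$ term is $q_v-w\geq 0$; for $n\geq 2$ the convexity of $q_v^n$ forces $q_v^n-nw\geq q_v-w\geq 0$ once $q_v\geq w$, so the minimum is nonnegative and $\ord_v(\Li^\star_\fs(\bu)_v)\geq 0$, i.e.\ the value is a $v$-adic integer.

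The step I expect to be the main obstacle is controlling the valuations through the functional equation in the inductive step. The identity in Theorem~\ref{functional equation} involves the intermediate coordinates $V_{m,j}$ of $\rho_a(\bv_{\tilde\fs,\tilde\bu})$, and while Proposition~\ref{CM19_4.1.1} guarantees these lie in $\fm_v$, tracking that the combination $\theta^\ell\binom{j}{\ell}$ together with the division by $a(\theta)$ never lowers the valuation below $B_{w,v}$ requires a careful bookkeeping of how $B_{w,v}$ interacts with the depth reduction --- in particular verifying that the two polylog terms of differing depth on each line both satisfy the same weight-$w$ bound, so that no cancellation or loss occurs. A secondary subtlety is making the reduction in part (1) genuinely independent of stuffle relations, using only the functional equation to close the multiplicativity.
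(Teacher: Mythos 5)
Your overall toolkit is right (the functional equation plus a term-by-term estimate for convergent series), but the inductive skeleton you hang it on does not close. The key flaw is your claim that in Theorem~\ref{functional equation} each polylogarithm on the right-hand side ``has strictly smaller depth.'' It does not: the index $\fs_{(r+1-m)}$ for $m=r$ is $\fs_{(1)}=\fs$ itself, so the right-hand side contains CMSPLs of the \emph{same} depth $r$ and weight $w$ as the left-hand side, and an induction on depth never terminates. What the functional equation actually buys is not a depth reduction but a convergence gain: since $\rho_a(\bv_{\tilde{\fs},\tilde{\bu}})\in G_{\tilde{\fs},\tilde{\bu}}(\fm_v)$ and its entries lie in $A$, every first argument $\theta^{j-\ell}V_{m,\tilde{d}_m-j}$ on the right lies in $vA$, so a \emph{single} application of the functional equation expresses any $\Li^\star_\fs(\bu)_v\in S_{w,v}$ as a $(k\cap A_v)$-linear combination of elements of $S^\star_{w,v}$ (the coefficients are harmless because $\ord_v(a(\theta))=0$ and $\ord_v\bigl(\theta^\ell\binom{j}{\ell}\bigr)\geq 0$). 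That is the whole of part (1)'s spanning statement and the reduction step for part (2); no induction is needed. Your justification of the spanning claim by the inclusion-exclusion of Remark~\ref{stuffle} also misses the point: inclusion-exclusion relates star and non-star polylogarithms, but it cannot move a first argument from $A\setminus vA$ into $vA$, which is the actual issue --- for $|u_1|_v=1$ the defining series diverges and the value exists only by analytic continuation.

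Once one is in $S^\star_{w,v}$, the estimate is a direct computation on the honest power series, essentially your base case done uniformly in all depths: $\ord_v\bigl(u_1^{q^{i_1}}\cdots u_r^{q^{i_r}}/(L_{i_1}^{s_1}\cdots L_{i_r}^{s_r})\bigr)\geq q^{i_1}-\wt(\fs)\lfloor i_1/\epsilon_v\rfloor\geq B_{w,v}$, using $\ord_v(u_1)\geq 1$ and $\ord_v(L_{i_j})\leq\ord_v(L_{i_1})$ for $i_j\leq i_1$. For the algebra structure you cannot dispense with stuffle entirely as you propose: the functional equation is linear and says nothing about products. The paper's point is rather that stuffle \emph{is} legitimate for elements of $S^\star_{w,v}$ because they are genuine convergent series, so $\cL_{w_1,v}\cdot\cL_{w_2,v}\subset\cL_{w_1+w_2,v}$ follows from Remark~\ref{stuffle} after the reduction to $S^\star_{w,v}$. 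Your verification that $B_{w,v}\geq 0$ when $q_v\geq w$ is fine and matches the paper's.
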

    
    \begin{proof}
        Indeed, Theorem \ref{functional equation} asserts that every $\Li_\fs^\star(\bu)_v\in S_{w,v}$ is just the $k$-linear combination of elements in $S^\star_{w,v}$. Thus, $\mathcal{L}_{w,v}$ is generated by $S_{w,v}^\star$. 
        Consequently, the following inclusion holds by Remark \ref{stuffle}: $$\mathcal{L}_{w_1,v}\cdot\mathcal{L}_{w_2,v}\subset \mathcal{L}_{w_1+w_2,v}.$$ The first assertion now follows immediately. For the second part, one observes that $$\ord_v(\Li_\fs^\star(\bu)_v)\geq B_{w,v}~\mbox{ for every }\Li_\fs^\star(\bu)_v\in S_{w,v}^\star$$ since $$\ord_v(\frac{u_1^{q^{i_1}}\dots u_r^{q^{i_r}}}{L_{i_1}^{s_1}\cdots L_{i_r}^{s_r}})\geq q^{i_1}\cdot\ord_v(u_1)-\wt(\fs)\ord_v(L_{i_1})\geq q^{i_1}-\wt(\fs)\lfloor \frac{i_1}{\epsilon_v}\rfloor\geq B_{w,v}.$$ Here, the second inequality comes from Proposition~\ref{Basic_Estimation} and the last inequality comes from the fact that if we write $i_1=\alpha\cdot \epsilon_v+\beta$, then $$q^{i_1}-\wt(\fs)\lfloor\frac{i_1}{\epsilon_v}\rfloor\geq q_v^\alpha-\wt(\fs)\alpha.$$ On the other hand, every $\Li_\fs^\star(\bu)\in S_{w,v}$ is just the $k$-linear combination of element in $S_{w,v}^\star$. Moreover, every coefficient lies in $k\cap A_v$ since $\ord_v(a(\theta))=0$. Hence, $$\ord_v(\Li_\fs^\star(\bu)_v)\geq B_{w,v}~\mbox{ for every }\Li_\fs^\star(\bu)_v\in S_{w,v}.$$ Finally, if $q_v\geq \wt(\fs)$ then we claim that for $w=\wt(\fs)$ we have $B_{w,v}\geq 0$. To see this, consider the function $$f_{w,v}(x):=q_v^x-x\cdot w.$$ Then we have $$f_{w,v}(2)=q_v^2-2w\geq q_v^2-2q_v\geq 0.$$ Moreover, we also have  $$f'(x)=\ln q_v\cdot q_v^x-w>\frac{1}{2}\cdot q_v^x-w\geq q_v^{x-1}-w\geq 0~\mbox{ for }x\geq 2.$$ Thus $f_{w,v}(x)$ is monotonically increasing on $x\geq 2$ and hence $$B_{w,v}\geq\min\{1,q_v-w\}\geq 0.$$ In other words, $$\Li_\fs^\star(\bu)_v\in A_v\mbox{ for }q_v\geq\wt(\fs).$$ Hence we complete the proof.
    \end{proof}
    
    We provide an explicit example for Theorem~\ref{functional equation} as follows:
    
    \begin{example}
        Consider $r=1$, $v=\theta$, $u\in A$ and $s\in\mathbb{N}$. In this case, we have $a(t)=t^s-1$ and thus $$\rho_a(\mathbf{v}_{\Tilde{s},\Tilde{u}})=(\binom{s}{1}\theta u,\binom{s}{2}\theta^2 u,\dots,\binom{s}{s-1}\theta^{s-1} u,\theta^s+(u^q-u))^\tr.$$ By the functional equation, we obtain $$\Li_s^\star(u)_v=\frac{1}{\theta^s-1}\{\Li_s^\star(\theta^s u+u^q-u)_v+\underset{j=1}{\overset{s-1}{\sum}}\underset{k=0}{\overset{j}{\sum}}(-1)^{j+k}\binom{j}{k}\theta^k\Li_s^\star(\binom{s}{j}\theta^{s-k} u)_v\}.$$ In particular, if $s=p^\ell$ for some $\ell\in\mathbb{Z}_{\geq 0}$, then $$\Li_s^\star(u)_v=\frac{1}{\theta^s-1}\Li_s^\star(\theta^s u+u^q-u)_v.$$
    \end{example}
    
    \begin{remark}
        If we replace $u_1,\dots,u_r$ by $r$ independent variables $z_1,\dots,z_r$, then by considering the $t$-module $G$ defined over $A[z_1,\dots,z_r]$, the formula in Theorem~\ref{functional equation} is still valid in the formal power series ring $k\llbracket z_1,\dots,z_r\rrbracket$ (cf. \cite[Rem.~3.3.6]{CM19a}).
    \end{remark}
    
    \begin{remark}
        For depth one case, ie., $\fs=s$ and $\bu=u$, our $t$-module $G_{\fs,\bu}$ is the $s$-th tensor power of the Carlitz module and the formula in Theorem~\ref{functional equation} coincides with \cite[Rem.~7.6.2]{T04}.
    \end{remark}

\section{$v$-adic multiple zeta values}
    The aim of this section is to provide a criterion for the integrality of $v$-adic multiple zeta values (MZVs). We first recall the formulation for $v$-adic MZVs via $v$-adic CMSPLs. Then we estimate the $v$-adic valuation of $v$-adic MZVs by using Corollary~\ref{lower bound of CMPSL's}. As a consequence, we provide an explicit lower bound for the $v$-adic valuation and a precise criterion for the integrality of $v$-adic MZVs.

\subsection{Formulation through $v$-adic CMSPLs}
    To introduce the formula of $v$-adic MZVs via $v$-adic CMSPLs, we need to review the Anderson-Thakur polynomials~\cite{AT90}. Let $t$ be a variable independent from $\theta$. We set $F_0:=1$, $F_i:=\underset{j=1}{\overset{i}{\prod}}(t^{q^i}-\theta^{q^i})$. Then the Anderson-Thakur polynomials $H_n\in A[t]$ is defined by the following generating function: $$\left(1-\underset{i=0}{\overset{\infty}{\sum}}\frac{F_i}{D_i\mid_{\theta=t}}x^{q^i}\right)^{-1}=\underset{n=0}{\overset{\infty}{\sum}}\frac{H_n}{\Gamma_{n+1}\mid_{\theta=t}}x^n.$$ For each $1\leq i\leq r$, we express the Anderson-Thakur polynomial $H_{s_i-1}(t)\in A[t]$ as $$H_{s_i-1}(t)=\underset{j=0}{\overset{m_i}{\sum}}u_{ij}t^j,$$ where $u_{ij}\in A$ and $u_{im_i}\neq 0.$ Now, we define $$\mathbf{J}_\fs:=\{0,1,\cdots,m_1\}\times\cdots\times\{0,1,\cdots,m_r\}.$$
    Given $\mathbf{j}=(j_1,\cdots,j_r)\in\mathbf{J}_\fs$, we put $\bu_\mathbf{j}:=(u_{1j_1},\cdots,u_{rj_r})\in A^r$ and $a_\mathbf{j}:=t^{j_1+\cdots+j_r}\in A[t]$. To state the formulation, we further introduce the following definition.

\begin{definition}[cf.~{\cite[Def.~5.2.1]{CM19b}}]
    Let $\fs:=(s_1,\cdots,s_r)\in\mathbb{N}^r$ and $\bu\in A^r$ with $r>1$ and let $S:=\{0,1\}$. For any $\mathbf{w}:=(\mathbf{w}_1,\cdots,\mathbf{w}_{r-1})\in S^{r-1}$, we define $$\mathbf{w}(\fs):=(s_1\lambda(\mathbf{w}_1)s_2\lambda(\mathbf{w}_2)\cdots\lambda(\mathbf{w}_{r-1})s_r)$$ where $\lambda(0)=","$(comma) and $\lambda(1)="+"$(addition). We also define $$\mathbf{w}^\times(\bu):=(u_1\mu(\mathbf{w}_1)u_2\mu(\mathbf{w}_2)\cdots\mu(\mathbf{w}_{r-1})u_r)$$ where $\mu(0)=","$(comma) and $\mu(1)="\times"$(multiplication).
\end{definition}

Now, we are ready to give the formulation of $v$-adic MZVs via $v$-adic CMSPLs.

\begin{definition}
    For any index $\fs:=(s_1,\cdots,s_r)\in\mathbb{N}^r$, let the notation be same as above. We renumber the set $$\{((-1)^{r-1}a_\mathbf{j}(\theta),\mathbf{w}(\fs),\mathbf{w}^\times(\bu_\mathbf{j}))\mid\mathbf{j}\in\mathbf{J}_\fs,\mathbf{w}\in S^{r-1}\}=\{(b_\ell,\fs_\ell,\bu_\ell)\}.$$ Then we define the $v$-adic MZV $\zeta_A(\fs)_v$ to be $$\zeta_A(\fs)_v:=\frac{1}{\Gamma_\fs}\sum_\ell b_\ell\cdot(-1)^{\dep(\fs_\ell)-1}\Li^\star_{\fs_\ell}(\bu_\ell)_v\in k_v.$$
\end{definition}

\begin{remark}
    This definition, given in ~\cite[Def.~6.1.1]{CM19b}, is inspired by Furusho's $p$-adic MZVs (see \cite{F04}) and the logarithmic interpretation of $\infty$-adic MZVs (see ~\cite{AT90}, ~\cite{CM19b}). Note that we have the following identity \cite[Thm.~5.2.5]{CM19b}: $$\zeta_A(\fs)=\frac{1}{\Gamma_\fs}\sum_\ell b_\ell\cdot(-1)^{\dep(\fs_\ell)-1}\Li^\star_{\fs_\ell}(\bu_\ell)\in k_\infty.$$
\end{remark}


\subsection{Main result and examples}
    In this subsection, we prove the explicit lower bound for the $v$-adic valuation and the precise criterion for the integrality of $v$-adic MZVs. 

    \begin{theorem}\label{Main result}
        Let $\fs=(s_1,\cdots,s_r)\in\mathbb{N}^r$. If we set $$B_{w,v}:=\min_{n\geq 0}\{q_v^n-n\cdot w\},$$ then we have
        $$\ord_v(\zeta_A(\fs)_v)\geq B_{\wt(\fs),v}-\frac{\wt(\fs)-\dep(\fs)-\height(\fs)}{q_v-1}.$$
        In particular, $$\zeta_A(\fs)_v\in A_v~\mbox{ if }q_v\geq\wt(\fs).$$
    \end{theorem}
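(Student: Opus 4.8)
The plan is to read the claimed bound directly off the defining formula
$$\zeta_A(\fs)_v = \frac{1}{\Gamma_\fs}\sum_\ell b_\ell\,(-1)^{\dep(\fs_\ell)-1}\Li^\star_{\fs_\ell}(\bu_\ell)_v,$$
by controlling separately the scalar $1/\Gamma_\fs$, the coefficients $b_\ell$, and the CMSPL values $\Li^\star_{\fs_\ell}(\bu_\ell)_v$. First I would record the structural features of the terms coming from the Anderson--Thakur machinery. Each $\fs_\ell$ arises as $\mathbf{w}(\fs)$ for some $\mathbf{w}\in\{0,1\}^{r-1}$, i.e. is obtained from $\fs$ by merging adjacent entries via addition, so $\wt(\fs_\ell)=\wt(\fs)=:w$. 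Each $\bu_\ell$ arises as $\mathbf{w}^\times(\bu_{\mathbf j})$, a tuple of products of the coefficients $u_{ij_i}\in A$, hence $\bu_\ell\in A^{\dep(\fs_\ell)}$. Consequently every term $\Li^\star_{\fs_\ell}(\bu_\ell)_v$ lies in $S_{w,v}$, so Corollary~\ref{lower bound of CMPSL's}(2) gives $\ord_v(\Li^\star_{\fs_\ell}(\bu_\ell)_v)\geq B_{w,v}$. Moreover $b_\ell=(-1)^{r-1}\theta^{j_1+\cdots+j_r}\in A\subset A_v$, so $\ord_v(b_\ell)\geq 0$.

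Next I apply the ultrametric inequality to the defining sum:
$$\ord_v(\zeta_A(\fs)_v)\geq -\ord_v(\Gamma_\fs)+\min_\ell\{\ord_v(b_\ell)+\ord_v(\Li^\star_{\fs_\ell}(\bu_\ell)_v)\}\geq -\ord_v(\Gamma_\fs)+B_{w,v}.$$
Feeding in the bound $\ord_v(\Gamma_\fs)\leq(\wt(\fs)-\dep(\fs)-\height(\fs))/(q_v-1)$ from Proposition~\ref{Basic_Estimation}(2) then yields at once the asserted inequality $\ord_v(\zeta_A(\fs)_v)\geq B_{w,v}-(\wt(\fs)-\dep(\fs)-\height(\fs))/(q_v-1)$.

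For the integrality statement when $q_v\geq w$, the coarse inequality above is not quite enough, since the subtracted term $(\wt(\fs)-\dep(\fs)-\height(\fs))/(q_v-1)$ can exceed $B_{w,v}$; the point I would instead exploit is that $\Gamma_\fs$ is a $v$-adic unit in this range. Indeed each $s_i\leq w\leq q_v=q^{\epsilon_v}$ forces the base-$q$ digits of $s_i-1$ to lie in positions $j<\epsilon_v$, and for such $j$ one has $\alpha=\lfloor j/\epsilon_v\rfloor=0$, whence $\ord_v(D_j)=q^{j}\cdot(q_v^0-1)/(q_v-1)=0$ by Proposition~\ref{Basic_Estimation}(1). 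Therefore $\ord_v(\Gamma_{s_i})=0$ for every $i$ and $\ord_v(\Gamma_\fs)=0$; combined with $b_\ell\in A_v$ and the ``in particular'' clause of Corollary~\ref{lower bound of CMPSL's}(2), which gives $\Li^\star_{\fs_\ell}(\bu_\ell)_v\in A_v$, this shows $\zeta_A(\fs)_v\in A_v$.

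The only genuine bookkeeping is verifying that $\mathbf{w}(\fs)$ preserves the weight and that $\mathbf{w}^\times(\bu_{\mathbf j})$ keeps the arguments in $A$, so that Corollary~\ref{lower bound of CMPSL's} applies verbatim to each summand; no new estimate is needed beyond the two cited results. The mild subtlety to keep in mind --- and the only place where the coarse inequality must be sharpened --- is the integrality case, where one must observe the vanishing $\ord_v(\Gamma_\fs)=0$ directly rather than relying on the general upper bound for $\ord_v(\Gamma_\fs)$.
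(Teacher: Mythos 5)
Your proposal is correct, and the first half --- the ultrametric estimate $\ord_v(\zeta_A(\fs)_v)\geq -\ord_v(\Gamma_\fs)+\min_\ell\{\ord_v(b_\ell)+\ord_v(\Li^\star_{\fs_\ell}(\bu_\ell)_v)\}$ combined with Corollary~\ref{lower bound of CMPSL's}(2), $\ord_v(b_\ell)\geq 0$, and Proposition~\ref{Basic_Estimation}(2) --- is exactly the paper's argument. Where you genuinely diverge is the integrality clause. The paper \emph{does} get it from the coarse inequality: when $q_v\geq\wt(\fs)$ one has $B_{\wt(\fs),v}\geq 0$ and $\frac{\wt(\fs)-\dep(\fs)-\height(\fs)}{q_v-1}<1$, so the bound reads $\ord_v(\zeta_A(\fs)_v)>-1$, and since the valuation is an integer this forces $\ord_v(\zeta_A(\fs)_v)\geq 0$. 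So your remark that the coarse inequality is ``not quite enough'' is not accurate as stated --- it suffices once you invoke $\ord_v(\zeta_A(\fs)_v)\in\ZZ\cup\{+\infty\}$. Your alternative, showing that $\Gamma_\fs$ is a $v$-adic unit whenever $q_v\geq\wt(\fs)$ (each $s_i-1<q^{\epsilon_v}$ has all base-$q$ digits in positions $j<\epsilon_v$, where $\ord_v(D_j)=0$ by Proposition~\ref{Basic_Estimation}(1)), is correct and arguably cleaner: it avoids the integrality-of-the-valuation trick entirely and gives the slightly stronger conclusion $\ord_v(\zeta_A(\fs)_v)\geq B_{\wt(\fs),v}\geq 0$ in that range, rather than merely $>-1$. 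Both routes are valid; the paper's is shorter, yours isolates the structural reason ($\Gamma_\fs\in A_v^\times$) for integrality.
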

    
    \begin{remark}
        We will provide a non-integral example when the restriction in the theorem is omitted. The example was found by using the computer algebra system SageMath. The author is grateful to Yoshinori Mishiba for providing the example.
    \end{remark}
    
    
    \begin{proof}
        Since $\wt(s_\ell)=\wt(s)$ for all $\ell$ by definition, we can use Corollary~\ref{lower bound of CMPSL's} to obtain $$\ord_v(\Li_{\fs_\ell}^\star(\bu_\ell)_v)\geq B_{\wt(\fs),v}\mbox{ for all }\ell.$$ On the other hand, $b_\ell$ is just the power of $\theta$ and hence $\ord_v(b_\ell)\geq 0$. Finally, $$\ord_v(\Gamma_\fs)\leq \frac{\wt(\fs)-\dep(\fs)-\height(\fs)}{q_v-1}$$ by Proposition~\ref{Basic_Estimation}. We can conclude that
        \begin{align*}
            \ord_v(\zeta_A(\fs)_v)&=\ord_v(\frac{1}{\Gamma_\fs}\sum_\ell b_\ell\cdot(-1)^{\dep(\fs_\ell)-1}\Li^\star_{\fs_\ell}(\bu_\ell)_v)\\
            &\geq \min_{\ell}\{\ord_v(b_\ell\cdot(-1)^{\dep(s_\ell)-1}\Li_{\fs_\ell}^\star(\bu_\ell)\}-\frac{\wt(\fs)-\dep(\fs)-\height(\fs)}{q_v-1}\\
            &\geq B_{\wt(\fs),v}-\frac{\wt(\fs)-\dep(\fs)-\height(\fs)}{q_v-1}.
        \end{align*}
        In particular, if $q_v\geq\wt(\fs)$, then we have $$\frac{\wt(\fs)-\dep(\fs)-\height(\fs)}{q_v-1}<1.$$ On the other hand, the condition $q_v\geq\wt(\fs)$ guarantees that $B_{\wt(\fs),v}\geq 0$. As a result, if $q_v\geq\wt(\fs)$, then $$\ord_v(\zeta_A(\fs)_v)\geq B_{\wt(\fs),v}-\frac{\wt(\fs)-\dep(\fs)}{q_v-1}>-1.$$ The desired result now follows from the fact that $\ord(\zeta_A(\fs)_v)\in\mathbb{Z}$.
       
    \end{proof}


    Now, we give a non-integral example.

\begin{example}
    Consider $q=2$, $v=\theta$ and $\fs=(4,1)$. Then $$\Gamma_{(4,1)}=\Gamma_4\cdot\Gamma_1=\theta^2+\theta,~ H_{s_1-1}(t)=H_3(t)=t^2+t$$ $$H_{s_2-1}(t)=H_0(t)=1,~\mathbf{J}_\fs=\{(0,0),(1,0),(2,0)\}$$ and 
    $$(b_1,\bs_1,\bu_1)=(1,(4,1),(0,1)),~(b_2,\bs_2,\bu_2)=(1,(5),(0)),$$ 
    $$(b_3,\bs_3,\bu_3)=(\theta,(4,1),(1,1)),~(b_4,\bs_4,\bu_4)=(\theta,(5),(1)),$$ 
    $$(b_5,\bs_5,\bu_5)=(\theta^2,(4,1),(1,1)),~(b_6,\bs_6,\bu_6)=(\theta^2,(5),(1)).$$
    Thus, $$\zeta_A(4,1)_\theta=\Li^\star_{(4,1)}(1,1)_\theta+\Li^\star_{(5)}(1)_\theta.$$ Here we use the fact that $1=-1$ since $q=2$. Note that 
    \begin{equation}\label{non-integral_example}
        \ord_\theta(d_1\mbox{-th coordinate of }P_i\tau^i(\bv))\geq 2^i-5i.
    \end{equation}
    Thus, in the computation of $\theta$-adic CMSPLs of right hand side above (\ref{non-integral_example}), we can just compute the first four terms of $\log_G(\rho_a(\bv))$. Consequently, we obtain $$\zeta_A(4,1)_\theta=\theta^{-3}+\theta^2+O(\theta^7)\not\in A_\theta.$$
\end{example}

\section{Adelic MZVs and finite MZVs}
    In this section, we formulate the adelic MZVs over function fields and investigate its properties.
    We apply \cite[Thm.~6.1.1]{CM19b} to see that the $\bar{k}$-linear space
    spanned by $\infty$-adic MZVs is isomorphic to the $\bar{k}$-linear space
    adelic MZVs. Thus, the dimension formula which was conjectured by Todd \cite{To18} also fit into our adelic framework. Finally, We discuss potential connections between adelic MZVs and finite MZVs.

\subsection{Formulation of adelic MZVs}
    We recall the $\infty$-adic MZVs over function fields which was initially studied by Thakur in \cite{T04}. Then we give our formulation of the adelic MZVs by using our integrality result Theorem~\ref{Main result}.
    \begin{definition}
        Let $\fs:=(s_1,\cdots,s_r)\in\mathbb{N}^r$. Then
        \begin{enumerate}
            \item For the infinite place, we define $$\zeta_A(\fs)_\infty:=\sum\frac{1}{a_1^{s_1}\cdots a_r^{s_r}}\in k_\infty$$ where $a_1,\cdots,a_r$ runs over all monic polynomials in $A$ with $0\leq |a_r|_\infty<\cdots<|a_1|_\infty$.
            \item For the adelic setting, we define $$\zeta_{\mathbb{A}_k}(\fs):=(\zeta_A(\fs)_v)_{v\in M_k}\in\mathbb{A}_k$$ where $M_k$ is the set of all places of $k$ and $\mathbb{A}_k$ is the adele ring of $k$.
        \end{enumerate}
    \end{definition}
    \begin{remark}
        Theorem \ref{Main result} implies that $\zeta_{\mathbb{A}_k}(\fs)$ is well-defined.
    \end{remark}

\subsection{Dimension conjecture}
    In what follows, we recall \cite[Thm.~6.4.1]{CM19b} which implies that the $\bar{k}$-linear relations among $\infty$-adic MZVs dominate the $\bar{k}$-linear relations among $v$-adic MZVs. Then we can obtain the $\bar{k}$-vector space isomorphism from above discussion.
    \begin{theorem}[{\cite[Thm.~6.4.1]{CM19b}}]\label{infinite_to_finite}
        Let $v$ be a finite place of $k$ and fix an embedding $\bar{k}\hookrightarrow \mathbb{C}_v$ Let $w$ be a positive integer and let $\overline{\mathcal{Z}}_w$ be the $\bar{k}$-vector space spanned by all $\infty$-adic MZVs of weight $w$, and let $\overline{\mathcal{Z}}_{w,v}$ be the $\bar{k}$-vector space spanned by all $v$-adic MZVs of weight $w$. Then we have a well-defined surjective $\bar{k}$-linear map $$\overline{\mathcal{Z}}_w\twoheadrightarrow\overline{\mathcal{Z}}_{w,v}$$ given by $$\zeta_A(\fs)_\infty\mapsto\zeta_A(\fs)_v$$ and kernel contains the one-dimension vector space $\bar{k}\cdot\zeta_A(w)$ when $w$ is divisible by $q-1$.
    \end{theorem}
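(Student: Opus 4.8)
The statement has three parts, and the plan is to treat them in increasing order of difficulty. Surjectivity is immediate: by construction the generators $\zeta_A(\fs)_v$ of $\overline{\mathcal{Z}}_{w,v}$ are exactly the images of the generators $\zeta_A(\fs)_\infty$ of $\overline{\mathcal{Z}}_w$. The content of well-definedness is that every $\bar{k}$-linear relation $\sum_i c_i\,\zeta_A(\fs^{(i)})_\infty=0$ (with $c_i\in\bar{k}$ and $\wt(\fs^{(i)})=w$) forces $\sum_i c_i\,\zeta_A(\fs^{(i)})_v=0$. The guiding observation is that $\zeta_A(\fs)_\infty$ and $\zeta_A(\fs)_v$ obey one and the same recipe: each is a fixed $\bar{k}$-linear combination, with coefficients $1/\Gamma_\fs$, $b_\ell$ and the signs $(-1)^{\dep(\fs_\ell)-1}$ all in $\bar{k}$ and independent of the place, of prescribed coordinates of $\log_G$ evaluated at an algebraic point, where $G$ is a $t$-module over $\bar{k}$ as in (\ref{E:Explicit t-moduleCMPL}). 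The only difference is whether the logarithm is summed $\infty$-adically or $v$-adically (Theorem~\ref{logarithm_interpretation} and its $\infty$-adic counterpart from \cite{AT90,CM19b}).

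Starting from such a relation, I would assemble the direct sum $G:=\bigoplus_{i,\ell}G_{\fs^{(i)}_\ell,\bu^{(i)}_\ell}$ of all the $t$-modules attached to the CMSPLs occurring on the left-hand side, together with the algebraic point $\mathbf{v}\in G(\bar{k})$ built blockwise from the points (\ref{E:v_s,u}). Collecting the scalars $c_i b_\ell^{(i)}(\pm1)/\Gamma_{\fs^{(i)}}$ into a single $\bar{k}$-linear functional $\Phi$ on $\Lie G$ that reads off the relevant coordinate of each block, the hypothesis becomes precisely $\Phi\big(\log_{G,\infty}(\rho_{a'}\mathbf{v})\big)=0$, where $a'\in\FF_q[t]$ is chosen so that $\rho_{a'}\mathbf{v}$ lies in the $\infty$-adic convergence domain of $\log_G$. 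The crucial input is now Yu's sub-$t$-module theorem \cite{Yu97}: since $\rho_{a'}\mathbf{v}$ is algebraic and a $\bar{k}$-linear form annihilates its $\infty$-adic logarithm, there is a sub-$t$-module $H\subseteq G$ defined over $\bar{k}$ with $\log_{G,\infty}(\rho_{a'}\mathbf{v})\in\Lie H(\CC_\infty)$ and $\Phi|_{\Lie H}=0$.

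The passage to the place $v$ is then formal and rests on the algebraicity of $H$. Let $\pi\colon G\to G/H$ be the quotient isogeny over $\bar{k}$. Applying the intertwining relation $\exp_{G/H}\circ\,\partial\pi=\pi\circ\exp_G$ to $\log_{G,\infty}(\rho_{a'}\mathbf{v})\in\Lie H=\ker\partial\pi$ yields $\pi(\rho_{a'}\mathbf{v})=\exp_{G/H}(0)=0$, i.e. $\rho_{a'}\mathbf{v}\in H(\bar{k})$, a place-independent condition. Since a $v$-adic MZV does not depend on the auxiliary polynomial, I may compute it with $a=a'a_v$, where $a_v$ is a parameter as in Proposition~\ref{CM19_4.1.1}; then $\rho_a\mathbf{v}=\rho_{a'}(\rho_{a_v}\mathbf{v})$ still lies in $G(\fm_v)$ (as $\rho_{a'}$ has $v$-integral coefficients and preserves $G(\fm_v)$), while $\pi(\rho_a\mathbf{v})=\rho_{a_v}\,\pi(\rho_{a'}\mathbf{v})=0$ shows $\rho_a\mathbf{v}\in H(\bar{k})$. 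Restricting $\log_{G,v}$ to the sub-$t$-module gives $\log_{G,v}(\rho_a\mathbf{v})=\log_{H,v}(\rho_a\mathbf{v})\in\Lie H(\CC_v)$, whence $\Phi\big(\log_{G,v}(\rho_a\mathbf{v})\big)=0$; unwinding this equality is exactly $\sum_i c_i\,\zeta_A(\fs^{(i)})_v=0$. This establishes well-definedness, and with surjectivity the map is constructed. For the kernel statement it remains only to check the depth-one vanishing $\zeta_A(w)_v=0$ when $(q-1)\mid w$ — the $v$-adic analogue of the trivial zeros of Goss's zeta function — which one reads off directly from the logarithmic/power-series description of $\zeta_A(w)_v$ in depth one.

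I expect the genuine obstacle to be Yu's sub-$t$-module theorem, the deep transcendence-theoretic ingredient that manufactures the $\bar{k}$-rational sub-$t$-module $H$ from a single $\infty$-adic relation; everything downstream is formal once $H$ is available. The remaining care is bookkeeping: organizing the combined $t$-module and the functional $\Phi$ so that the relation, including the place-independent factors $\Gamma_{\fs^{(i)}}$ and $b_\ell^{(i)}$, is faithfully encoded, and reconciling the two convergence domains via the choice $a=a'a_v$.
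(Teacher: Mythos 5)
This theorem is not proved in the paper at all: it is quoted verbatim from \cite[Thm.~6.4.1]{CM19b}, and the paper's introduction describes that proof as resting on Yu's sub-$t$-module theory --- which is precisely the strategy you reconstruct (encode the $\infty$-adic relation as a $\bar{k}$-linear form killing the $\infty$-adic logarithm of an algebraic point of a direct-sum $t$-module, invoke \cite{Yu97} to produce a $\bar{k}$-rational sub-$t$-module, and transport the resulting place-independent algebraic condition to the place $v$). So your proposal is essentially the same argument as the cited source, with the correct identification of Yu's theorem as the one deep input. The only step you undersell is the kernel claim: the vanishing $\zeta_A(w)_v=0$ for $(q-1)\mid w$ is not a direct read-off from the power series but is itself a theorem of \cite{CM19a} (the depth-one point is, up to an $A$-scalar, the exponential of a $k^\times$-multiple of the Carlitz period, hence torsion, which forces the normalized $v$-adic logarithm to vanish), and one also needs $\zeta_A(w)_\infty\neq 0$ to know that $\bar{k}\cdot\zeta_A(w)$ is genuinely one-dimensional.
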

    As an application of Theorem~\ref{Main result} and Theorem~\ref{infinite_to_finite}, we have the following corollary:
    \begin{corollary}
        Let $\overline{\mathcal{Z}}_{w,\mathbb{A}_k}$ be the $\bar{k}$-vector space spanned by all adelic MZVs of weight $w$. Then the map $$\overline{\mathcal{Z}}_w\to\overline{\mathcal{Z}}_{w,\mathbb{A}_k}$$ 
        given by $$\zeta_A(\fs)_\infty\mapsto(\zeta_A(\fs)_v)_{v\in M_k}$$ is a well-defined $\bar{k}$-linear isomorphism. In particular, $$\overline{\mathcal{Z}}_w\cong\overline{\mathcal{Z}}_{w,\mathbb{A}_k}$$ as $k$-vector space.
    \end{corollary}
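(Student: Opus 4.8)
The plan is to exhibit the stated map, which I will call $\Phi$, as an isomorphism by treating its three features in turn: that it is well defined, that it is surjective essentially by construction, and that it is injective because the infinite place furnishes a one-sided inverse. The crucial preliminary observation is that $\overline{\mathcal{Z}}_{w,\mathbb{A}_k}$ even makes sense as a subspace of $\mathbb{A}_k$. For each $\fs$ with $\wt(\fs)=w$, Theorem~\ref{Main result} gives $\zeta_A(\fs)_v\in A_v$ whenever $q_v\geq w$, hence for all but finitely many finite places, so the tuple $(\zeta_A(\fs)_v)_{v\in M_k}$ is a genuine adele and $\overline{\mathcal{Z}}_{w,\mathbb{A}_k}$ is a well-defined $\bar{k}$-subspace of $\mathbb{A}_k$.

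First I would check that $\Phi\colon\zeta_A(\fs)_\infty\mapsto(\zeta_A(\fs)_v)_{v\in M_k}$ is a well-defined $\bar{k}$-linear map on $\overline{\mathcal{Z}}_w$. Since $\overline{\mathcal{Z}}_w$ is spanned by the $\infty$-adic MZVs, it suffices to show that every $\bar{k}$-linear relation $\sum_i c_i\,\zeta_A(\fs_i)_\infty=0$ forces $\sum_i c_i\,(\zeta_A(\fs_i)_v)_v=0$ in $\mathbb{A}_k$, and this can be verified place by place. At each finite $v$ the relation is preserved because Theorem~\ref{infinite_to_finite} provides a well-defined $\bar{k}$-linear surjection $\overline{\mathcal{Z}}_w\twoheadrightarrow\overline{\mathcal{Z}}_{w,v}$ sending $\zeta_A(\fs)_\infty$ to $\zeta_A(\fs)_v$; at $v=\infty$ the $\infty$-component of $\Phi$ is literally the identity. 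Hence $\Phi$ is well defined and $\bar{k}$-linear.

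Surjectivity is then immediate, since $\overline{\mathcal{Z}}_{w,\mathbb{A}_k}$ is defined as the $\bar{k}$-span of the adelic MZVs $\zeta_{\mathbb{A}_k}(\fs)=(\zeta_A(\fs)_v)_v$, which are exactly the images $\Phi(\zeta_A(\fs)_\infty)$. For injectivity I would use the projection $\pi_\infty\colon\mathbb{A}_k\to k_\infty$ onto the infinite component. Because the $v=\infty$ entry of $\zeta_{\mathbb{A}_k}(\fs)$ is precisely $\zeta_A(\fs)_\infty$, the composite $\pi_\infty\circ\Phi$ equals the identity on $\overline{\mathcal{Z}}_w$; a map admitting a left inverse is injective, so $\Phi$ is injective. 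Combining the three points gives the asserted $\bar{k}$-linear isomorphism, and since $k\subset\bar{k}$ this is in particular a $k$-linear isomorphism $\overline{\mathcal{Z}}_w\cong\overline{\mathcal{Z}}_{w,\mathbb{A}_k}$.

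I do not expect a serious obstacle, as the argument is formal once the two main theorems are in hand; the one point demanding care is well-definedness, where one must simultaneously invoke Theorem~\ref{Main result} (so that the target really lies in $\mathbb{A}_k$) and Theorem~\ref{infinite_to_finite} (so that $\bar{k}$-relations among $\infty$-adic values descend to every finite place). Conceptually, the infinite place is retained in the adelic tuple precisely so that it can split $\Phi$: the finite-place maps of Theorem~\ref{infinite_to_finite} are only surjective and generally carry nontrivial kernel, for instance $\bar{k}\cdot\zeta_A(w)$ when $(q-1)\mid w$, so injectivity genuinely relies on the $v=\infty$ coordinate rather than on any single finite place.
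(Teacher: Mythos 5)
Your proposal is correct and follows essentially the same route as the paper: the paper likewise deduces well-definedness and surjectivity from Theorem~\ref{infinite_to_finite} (with Theorem~\ref{Main result} guaranteeing, as noted in the paper's preceding remark, that the adelic tuple lies in $\mathbb{A}_k$) and obtains injectivity from the $\infty$-adic coordinate. Your write-up merely spells out the details the paper leaves implicit.
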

    
    \begin{proof}
        Indeed, this map is well-defined surjective $k$-linear map by Theorem~\ref{infinite_to_finite} and the injectivity is coming from the $\infty$-adic coordinate.
    \end{proof}
    
    In \cite{To18}, Todd discover some linear relations among the same weight $\infty$-adic MZVs, and makes the following conjecture.
    
    \begin{conjecture}\label{Todd's dimension conjecture}
        (Todd's dimension conjecture) Let $w\in\mathbb{N}$ and $\mathcal{Z}_w$ be the $k$-vector space generated by all $\infty$-adic MZVs of weight $w$. Then we have $$\dim_k\mathcal{Z}_w = 
        \left\{
        \begin{array}{cc}
              2^{w-1} & \mbox{if }1\leq w<q, \\
              2^{w-1}-1 & \mbox{if }w=q, \\
              \sum_{~i=1}^{~q}\dim_k\mathcal{Z}_{w-i} & \mbox{if }w>q.
        \end{array}
        \right.
        $$
    \end{conjecture}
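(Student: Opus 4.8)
The plan is to prove the two inequalities $\dim_k \mathcal{Z}_w \leq R(w)$ and $\dim_k \mathcal{Z}_w \geq R(w)$ separately, where $R(w)$ denotes the right-hand side of the conjecture; this mirrors the strategy for the classical Zagier--Hoffman dimension conjecture, with the upper bound coming from explicit $k$-linear relations and the lower bound coming from transcendence theory. Observe first that the number of indices $\fs \in \mathbb{N}^r$ (over all $r$) with $\wt(\fs) = w$ is exactly $2^{w-1}$, the number of compositions of $w$; all of these give convergent $\infty$-adic MZVs, so the range $1 \leq w < q$ of the conjecture asserts that the generators are $k$-linearly independent with no relations, while for $w \geq q$ genuine relations must appear. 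The recursive shape $\sum_{i=1}^q \dim_k \mathcal{Z}_{w-i}$ for $w > q$, equivalently the generating function $\big(1 - \sum_{i=1}^q X^i\big)^{-1}$ up to small-weight corrections, strongly suggests a systematic family of relations, indexed by the ``overflow'' past $q$, that rewrites every weight-$w$ MZV in terms of at most $q$ predecessor weights.

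For the upper bound I would first isolate the relations that drive the recursion. The base identity is the Euler--Carlitz/Thakur relation at weight $q$, responsible for the single dependence making $\dim_k \mathcal{Z}_q = 2^{q-1} - 1$, together with the sum-shuffle and stuffle relations for $\infty$-adic MZVs from \cite{T04} and \cite{C14}. Concretely, I would use the logarithmic interpretation of $\infty$-adic MZVs via CMSPLs and Anderson--Thakur polynomials \cite{AT90,CM19b} to produce, for each $w > q$, an explicit spanning set of size $\sum_{i=1}^q \dim_k \mathcal{Z}_{w-i}$: the key is to exhibit, for each index of weight $w$, a relation expressing $\zeta_A(\fs)$ modulo lower-weight products in terms of the predecessor weights $w-1, \dots, w-q$. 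Proving that these relations are \emph{exhaustive} --- that no further independent relation shrinks the span below $R(w)$ --- is itself a serious combinatorial problem and is where I expect the first real difficulty.

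For the lower bound I would pass to the period interpretation. By Anderson--Thakur \cite{AT90}, each $\infty$-adic MZV of weight $w$ is, up to an explicit factor, a period of an effective $t$-motive built from iterated extensions of Carlitz tensor powers, exactly the objects underlying the $t$-modules $G_{\fs,\bu}$ of Section~3. I would assemble the basis candidates into a single $t$-motive and apply the Anderson--Brownawell--Papanikolas linear independence criterion together with Papanikolas's Tannakian theory, as in Chang's transcendence results \cite{C14} and Yu's sub-$t$-module theorem \cite{Yu97}: the $k$-linear independence of the chosen MZVs is equivalent to a lower bound on the dimension of the associated motivic Galois group, and one must compute that group to match $R(w)$. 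Since the $\infty$-adic coordinate controls everything, Theorem~\ref{infinite_to_finite} then transfers the resulting count to the $v$-adic and adelic settings.

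The hardest part will be the lower bound: precisely determining the motivic Galois group of the weight-$w$ $t$-motive, because one must rule out all unexpected algebraic relations among the periods, and the ABP/Papanikolas machinery yields the dimension of this group only after a delicate analysis of the relevant $\sigma$-difference equations and their rigid analytic trivializations. A secondary but equally open obstacle is the completeness of the relation set in the upper bound: matching the two bounds exactly requires that the explicit relations account for \emph{every} $k$-linear dependence, which is not known in general. It is the interaction of these two obstacles that keeps the statement a conjecture rather than a theorem, and a complete proof would demand both a full classification of the relations and an exact computation of the relevant Galois groups.
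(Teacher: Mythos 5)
This statement is a \emph{conjecture} in the paper (attributed to Todd \cite{To18}), and the paper offers no proof of it --- it is invoked only to motivate the adelic framework of Section~5, where the actual proven content is merely that $\dim_k\mathcal{Z}_w=\dim_{\ok}\overline{\mathcal{Z}}_w=\dim_{\ok}\overline{\mathcal{Z}}_{w,\mathbb{A}_k}$. So there is no paper proof to compare yours against, and your proposal, as you yourself concede in the final paragraph, is a research program rather than a proof: both of its pillars are open. That said, the gaps deserve to be named precisely. For the upper bound, Todd's recursion was extracted from numerical computation of relations, and no explicit family of $k$-linear relations is currently known that provably spans enough dependencies to force $\dim_k\mathcal{Z}_w\leq\sum_{i=1}^{q}\dim_k\mathcal{Z}_{w-i}$ for all $w>q$; producing such relations from the Anderson--Thakur/CMSPL interpretation \cite{AT90,CM19b} is plausible in spirit but has not been carried out, and ``exhaustiveness'' is not even the right framing for the upper bound --- what is needed is simply a spanning set of the conjectured size, which is already unknown.

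For the lower bound, your appeal to the ABP criterion and Papanikolas's Tannakian theory is directionally reasonable but overstates what that machinery currently delivers: Chang's results \cite{C14} give linear independence for restricted families (for instance, monomials of MZVs of distinct weights, and the algebraic independence needed there), not the independence of a full candidate basis of the weight-$w$ graded piece. Computing the motivic Galois group of the $t$-motive assembled from \emph{all} weight-$w$ MZVs, so as to match the conjectured count exactly, would require ruling out every unexpected relation among the associated periods --- this is precisely the open problem, not a step one can cite. Note also a structural disanalogy with your classical model: in characteristic zero the motivic/period machinery (Terasoma, Deligne--Goncharov) yields the \emph{upper} bound $d_w$, while lower bounds are hopeless; in the function field setting the transcendence technology is stronger, but it has so far yielded independence only for special subfamilies. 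Your proposal correctly identifies where the difficulties lie, but neither inequality is established by it, which is consistent with the paper's treatment of the statement as an open conjecture.
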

    
    In order to see the connection between the Todd's dimension conjecture and the dimension of our adelic MZVs, we recall the following result from \cite{C14}.
    
    \begin{theorem}
        \cite[Thm.~2.2.1]{C14} If the given $\infty$-adic MZVs $Z_1,\cdots,Z_m$ are linearly independent over $k$, then $$1,Z_1,\cdots,Z_m$$ are linearly independent over $\ok$.
    \end{theorem}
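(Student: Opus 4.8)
The plan is to deduce $\ok$-linear independence from a period interpretation of MZVs together with the Anderson--Brownawell--Papanikolas (ABP) linear independence criterion. First I would invoke the result of Anderson--Thakur \cite{AT09} realizing each MZV as a period: to a weight-$w$ index $\fs$ one attaches a pre-$t$-motive with Frobenius matrix $\Phi\in\Mat_\ell(\ok[t])$ satisfying $\det\Phi=c\,(t-\theta)^N$ for some $c\in\ok^\times$, together with a rigid analytic trivialization $\Psi$ obeying $\Psi^{(-1)}=\Phi\Psi$, such that the specialization $\Psi(\theta)$ has among its entries $\tpi^{w}$ and $\tpi^{w}\zeta_A(\fs)$ up to $\ok^\times$-multiples, where $\tpi$ is the Carlitz period. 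Forming the direct sum of the motives attached to $Z_1,\dots,Z_m$ produces a single pair $(\Phi,\Psi)$ whose period values contain $1$ and each $Z_i$ after normalizing by the appropriate power of $\tpi$.

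Next, suppose for contradiction that $\alpha_0\cdot 1+\sum_{i=1}^{m}\alpha_iZ_i=0$ with $\alpha_j\in\ok$ not all zero. Clearing the relevant powers of $\tpi$ turns this into an $\ok$-linear relation among the entries of $\Psi(\theta)$. The ABP criterion then lifts the relation to the functional level: there is a row vector $f\in\Mat_{1\times\ell}(\ok[t])$ with $f(\theta)$ recording the coefficients $\alpha_j$ and $f\Psi=0$ identically.

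The crux is a Frobenius-descent run on the identity $f\Psi=0$. Since $(f\Psi)^{(-1)}=f^{(-1)}\Phi\Psi$, the map $f\mapsto f^{(-1)}\Phi$ preserves the finite-dimensional space of $\ok(t)$-relations annihilating $\Psi$; a Hilbert~90 / Lang-type argument for this Frobenius-module structure then yields generators whose specialization at $t=\theta$ has coefficients in $k=\FF_q(\theta)$. Combined with the essentially triangular shape of $\Phi$ --- whose diagonal blocks are Carlitz tensor powers contributing the independent powers of $\tpi$, the off-diagonal extension data contributing the MZVs --- this forces the part of the relation involving the $Z_i$ to descend to a genuine $k$-linear relation among $Z_1,\dots,Z_m$, contradicting their $k$-linear independence. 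The weight grading isolates the constant: because $1$ lives in weight $0$ while each normalized period is homogeneous of positive weight in $\tpi$, the coefficient $\alpha_0$ must vanish on its own, which is precisely what permits adjoining $1$ to the list.

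I expect the descent step to be the main obstacle: converting the polynomial identity $f\Psi=0$ supplied by ABP back into an honest $k$-linear relation among the original MZVs, while simultaneously separating off the weight-$0$ constant. This demands careful control of the Frobenius action on the relation space and of the precise manner in which the MZV $t$-motives sit as iterated extensions over Carlitz tensor powers.
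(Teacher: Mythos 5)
This statement is not proved in the paper at all: it is imported verbatim from \cite[Thm.~2.2.1]{C14} and used as a black box, so there is no in-paper argument to measure your attempt against. Your plan does reconstruct the strategy of the cited source: the Anderson--Thakur realization of each MZV as (up to a power of $\tpi$ and a $\Gamma$-factor) an entry of $\Psi(\theta)$ for a Frobenius difference system $\Psi^{(-1)}=\Phi\Psi$ with $\det\Phi=c(t-\theta)^N$, followed by the ABP linear independence criterion and a descent of the resulting functional relation $f\Psi=0$. At the level of strategy you have identified the right route.

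As a proof, however, the sketch has two genuine gaps. First, the step ``clearing the relevant powers of $\tpi$ turns this into an $\ok$-linear relation among the entries of $\Psi(\theta)$'' fails whenever the $Z_i$ do not all have the same weight, and the theorem is stated for arbitrary MZVs. The accessible entries of $\Psi(\theta)$ are, up to $\ok^\times$, the quantities $\tpi^{-w_i}$ and $\tpi^{-w_i}Z_i$; rewriting $\alpha_0+\sum_i\alpha_iZ_i=0$ in terms of these gives coefficients $\alpha_i\tpi^{w_i}$, which are transcendental rather than algebraic, so the hypothesis of the ABP criterion is simply not met. Circumventing this (via a refined, inhomogeneous version of the criterion and an induction organized by weight) is a substantial portion of the argument in \cite{C14}, not a normalization. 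Second, the Frobenius-descent step --- passing from $f\Psi=0$ with $f\in\Mat_{1\times\ell}(\ok[t])$ back to an honest $k$-linear relation among $Z_1,\dots,Z_m$ with the weight-$0$ constant split off --- is where essentially all the content of the theorem lies, and you only name it (``Hilbert 90 / Lang-type argument'') while explicitly deferring it. So the proposal should be read as a correct plan that matches the cited reference, but with its two hardest steps either unexecuted or, in the mixed-weight case, set up in a way that would not go through as written.
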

    
    As a consequence, studying the $\ok$-linear relations among $\infty$-adic MZVs is equivalent to studying $k$-linear relations among $\infty$-adic MZVs.
    In other words, the dimension of the $k$-vector space $\mathcal{Z}_w$ and the dimension of the $\ok$-vector space $\overline{\mathcal{Z}}_w$ must be the same. We summarize this discussion as the following theorem.
    
    \begin{theorem}
        We adopt the same notation. Then $$\dim_k\mathcal{Z}_w=\dim_{\ok}\overline{\mathcal{Z}}_w=\dim_{\ok}\overline{\mathcal{Z}}_{w,\mathbb{A}_k}.$$
    \end{theorem}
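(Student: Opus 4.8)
The plan is to prove the two equalities separately, each by invoking a single ingredient already available. For the first equality, $\dim_k\mathcal{Z}_w=\dim_{\ok}\overline{\mathcal{Z}}_w$, I would note that $\mathcal{Z}_w$ and $\overline{\mathcal{Z}}_w$ are the $k$-span and the $\ok$-span, respectively, of the \emph{same} generating set, namely the collection of all $\infty$-adic MZVs $\zeta_A(\fs)_\infty$ with $\wt(\fs)=w$. First I would choose a $k$-basis $\{Z_1,\dots,Z_m\}$ of $\mathcal{Z}_w$ consisting of MZVs drawn from this generating set; this is possible because the MZVs $k$-span $\mathcal{Z}_w$, so a maximal $k$-linearly independent subset among them is such a basis, and then $m=\dim_k\mathcal{Z}_w$. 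Since $\{Z_1,\dots,Z_m\}$ $k$-spans every weight-$w$ MZV, it \emph{a fortiori} $\ok$-spans $\overline{\mathcal{Z}}_w$, giving $\dim_{\ok}\overline{\mathcal{Z}}_w\leq m$.

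For the reverse inequality I would apply Chang's theorem \cite[Thm.~2.2.1]{C14}: since $Z_1,\dots,Z_m$ are $k$-linearly independent $\infty$-adic MZVs, the elements $1,Z_1,\dots,Z_m$ are $\ok$-linearly independent, so in particular $Z_1,\dots,Z_m$ remain $\ok$-linearly independent in $\overline{\mathcal{Z}}_w$, whence $\dim_{\ok}\overline{\mathcal{Z}}_w\geq m$. Combining the two inequalities yields $\dim_{\ok}\overline{\mathcal{Z}}_w=m=\dim_k\mathcal{Z}_w$. For the second equality, $\dim_{\ok}\overline{\mathcal{Z}}_w=\dim_{\ok}\overline{\mathcal{Z}}_{w,\mathbb{A}_k}$, I would simply invoke the preceding Corollary, which asserts that $\zeta_A(\fs)_\infty\mapsto(\zeta_A(\fs)_v)_{v\in M_k}$ defines a well-defined $\ok$-linear isomorphism $\overline{\mathcal{Z}}_w\iso\overline{\mathcal{Z}}_{w,\mathbb{A}_k}$; isomorphic vector spaces have equal dimension, so the two sides agree.

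The only point requiring care, and the step I would flag as the crux, is the passage from $k$-linear independence to $\ok$-linear independence in the first equality: it is essential that the chosen basis $\{Z_1,\dots,Z_m\}$ consists of genuine MZVs rather than arbitrary elements of $\mathcal{Z}_w$, so that Chang's theorem applies verbatim to them. All of the transcendence-theoretic content is packaged inside that cited theorem; once it is granted, the remaining argument is a routine dimension count. No separate obstacle arises for the adelic equality, since it is purely formal from the $\ok$-linear isomorphism already established.
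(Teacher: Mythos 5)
Your proposal is correct and follows the same route as the paper: the paper likewise derives the first equality from Chang's theorem \cite[Thm.~2.2.1]{C14} (which it records immediately before the statement, noting that $k$-linear independence of $\infty$-adic MZVs upgrades to $\ok$-linear independence) and the second equality from the preceding corollary's $\ok$-linear isomorphism $\overline{\mathcal{Z}}_w\cong\overline{\mathcal{Z}}_{w,\mathbb{A}_k}$. The paper presents this only as an informal discussion summarized into the theorem, so your write-up, with the explicit choice of a basis consisting of genuine MZVs, is simply a more careful rendering of the same argument.
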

    
\subsection{Finite MZVs in positive characteristic}
    Let $\fs=(s_1,\dots,s_r)\in\mathbb{N}^r$. Consider the $k$-algebra $\mathcal{A}_k:=(\prod_v A/v A)\otimes_A k\mathbb{k}$ where $v$ runs over all finite places $v$. Following Kaneko and Zagier, finite MZVs in positive characteristic are defined by $$\zeta_{\mathcal{A}_k}(\fs):=(\zeta_{\mathcal{A}_k}(\fs)_v)_v\in \mathcal{A}_k$$ where the $v$-component $\zeta_{\mathcal{A}_k}(\fs)_v$ is defined by $$\sum_{\deg_\theta v>\deg_\theta a_1>\cdots>\deg_\theta a_r\geq 0}\frac{1}{a_1^{s_1}\cdots a_r^{s_r}}\mbox{ mod }v.$$ Several properties of these values have already been established. For example, relations between FMZVs and finite CMPLs were developed in \cite{CM17}, several identities among FMZVs were worked out in \cite{Shi18} and non-vanishing properties were studied in \cite{ANDTR19} and \cite{PP15}.
    
    Inspired by Conjecture~\ref{Zagier's conjecture} and Theorem~\ref{upperbound of FMZV}, we raise the following question:
    \begin{question}
        Let $\mathcal{Z}_{w,\mathcal{A}_k}$ be the $k$-vector space spanned by all finite MZVs of weight $w$. Do we have a well-defined surjective $k$-linear map from $\mathcal{Z}_{w,\mathbb{A}_k}$ into $\mathcal{Z}_{w,\mathcal{A}_k}$ ? In particular, is it true that $$\dim_k\mathcal{Z}_{w,\mathcal{A}_k}\leq\dim_k\mathcal{Z}_{w,\mathbb{A}_k}?$$ If the answer to the above question is positive, then how should one formulate the dimension conjecture for $\mathcal{Z}_{w,\mathcal{A}_k}$ from Conjecture~\ref{Todd's dimension conjecture}?
    \end{question}


\bibliographystyle{alpha}

\end{document}